\definecolor{webgreen}{rgb}{0,.5,0}
\definecolor{webbrown}{rgb}{.8,0,0}
\definecolor{emphcolor}{rgb}{0.95,0.95,0.95}
\ifpdf \hypersetup{pdftex,
	pdfstartview=FitH, 
	bookmarksopen=true,
	bookmarksnumbered=true
} \else \hypersetup{dvips} \fi
\newcommand{\J}{\mathcal{J}}
\newcommand {\e}{\mathbb{E}}
\numberwithin{equation}{section}
\newtheorem{theorem}{Theorem}[section]
\newtheorem{proposition}{Proposition}[section]
\newtheorem{remark}{Remark}[section]
\newtheorem{lemma}{Lemma}[section]
\newtheorem{assump}{Assumption}[section]
\numberwithin{remark}{section} \numberwithin{proposition}{section}
\numberwithin{corollary}{section}
\newcommand {\R}{\mathbb{R}}
\newcommand {\p}{\mathbb{P}}
\newcommand {\E}{\mathbb{E}}
\newcommand{\diff}{{\rm d}}
\newcommand{\lev}{L\'{e}vy }
\title{L\'evy bandits under Poissonian decision times}
	\author[J. L. P\'erez]{Jos\'e-Luis P\'erez$^*$}
		\thanks{$*$\, Department of Probability and Statistics, Centro de Investigaci\'on en Matem\'aticasA.C. Calle Jalisco
		s/n. C.P. 36240, Guanajuato, Mexico. Email: jluis.garmendia@cimat.mx}
	\author[K. Yamazaki]{Kazutoshi Yamazaki$^\ddagger$}
\thanks{$\ddagger$\, School of 
Mathematics and Physics, The University of Queensland, St Lucia,
Brisbane, QLD 4072, Australia. Email: k.yamazaki@uq.edu.au}
\begin{document}

	\begin{abstract} 
	We consider a version of the continuous-time multi-armed bandit problem where decision opportunities arrive  at Poisson arrival times, and study its Gittins index policy.  When driven by spectrally one-sided \lev processes, the Gittins index can be written explicitly in terms of the scale function, and is shown to converge to that in the classical \lev bandit of Kaspi and Mandelbaum \cite{KM95}.
		\ \ 
		\\
		\noindent \small{\noindent  AMS 2020 Subject Classifications: 60G51, 93E20, 90B36 \\
			\textbf{Keywords:} multi-armed bandits, Gittins index, \lev processes, stochastic control}
	\end{abstract}

\maketitle

\section{Introduction}
The multi-armed bandit (MAB) problem is a stochastic dynamic optimization problem where at each decision time, one of  $J$ alternatives (called \emph{arms}) is selected and a reward is then collected. In its   infinite-time horizon formulation, the objective is to derive a policy (strategy) that maximizes the expected total discounted rewards. 
An important assumption of the MAB is that the state of each arm changes only when it is selected and the evolution of each arm is independent of others. By exploiting these features, the existing results have shown the optimality of the so-called \emph{Gittins index} policy \cite{Gittins}, which selects at each time the arm with the highest Gittins index. The Gittins index is defined in terms of an optimal stopping problem and can be computed separately for each arm. The state of each arm is usually assumed to evolve in a Markovian fashion, and thus the MAB can be seen as an instance of the Markov decision process (MDP) problem (see \cite{Puterman}). This optimality result is important in the MDP research in that the original $J$-dimensional MDP  problem can be reduced to $J$ independent $1$-dimensional MDPs, providing a way to overcome the \emph{curse of dimensionality}.

The continuous-time MAB theory has been developed in the 1990s by, for example, El Karoui and Karatzas \cite{ElKaroui_Karatzas} and Kaspi and Mandelbaum \cite{KM95, Mandelbaum88}.
 For the setting where an arm is continuously selected, advanced stochastic analysis techniques, in particular local time analysis, are required to precisely model the problem. However,  main results in the discrete-time model continue to hold in the continuous-time model.  The Gittins index can be defined in a similar way, and  it is optimal to choose the arm with the highest Gittins index at all times.
  The advantage of studying a continuous-time model 
  is that  the Gittins index can be sometimes obtained in an explicit form while numerical approaches are usually required in the discrete time case.
 In particular, Kaspi and Mandelbaum \cite{KM95} considered the model driven by one-dimensional \lev processes and expressed the Gittins index concisely using the Wiener-Hopf factorization and excursion theory of \lev processes.

In this paper, we revisit the MAB problem, in particular the \lev model of \cite{KM95}, and study a version 
where
decision opportunities arrive only at independent Poisson arrival times.  This problem is at the interface between the discrete-time and continuous-time models. The state of  each arm evolves continuously whereas decision times are discrete. 
We consider two variations of the problem and study the corresponding Gittins index policy for each. We introduce cutting-edge results of the fluctuation theory of \lev processes (e.g.\ \cite{Be, K}), which had not been available when the results of \cite{KM95} were developed.
%
%

The Poissonian observation model has recently drawn much attention in the stochastic control literature (see, e.g., \cite{Avanzi_Cheung_Wong_Woo,   Czarna,PPY,  PPSY,Perez_Yamazaki_Bensoussan,ZCY}), but it has not been considered in the MAB problem. 
With the assumption that decision opportunities arrive at exponential times, an analytical approach is still possible thanks to recently developed fluctuation-theory-based methods such as \cite{Albrecher, Lkabous, PY}.
The Gittins index can be written in terms of the so-called \emph{scale function} when the \lev process has only one-sided jumps (spectrally one-sided). We further investigate its connection with \cite{KM95} by showing the convergence of the Gittins index to that obtained in \cite{KM95}. 

The rest of the paper is organized as follows. In Section \ref{section_preliminaries}, we review the classical discrete-time and continuous-time models of the MAB problem, as well as the \lev model of \cite{KM95}. In Section \ref{section_problem}, we introduce the Poissonian decision time models and propose our Gittins index policy. In Section \ref{section_SN}, we review the
fluctuation theory of spectrally one-sided \lev processes and the scale function, and then obtain the Gittins index explicitly in terms of the scale function. In Section \ref{section_convergence}, we
 show the convergence of our Gittins index to that in  \cite{KM95}.  We conclude the paper in Section \ref{section_conclusion}. 

\section{Preliminaries} \label{section_preliminaries}


\subsection{Discrete-time Markovian bandits} \label{section_discrete_time}

A (multi-armed) Markovian bandit consists of $J \geq 1$ arms. For each arm labeled $j \in \mathcal{J} := \{1, \ldots, J\}$, we use an $E^j$-valued discrete-time Markov process $X^j = (X^j(k))_{k \geq 0}$ defined on a probability space $(\Omega^j, \mathcal{F}^j, \p^j)$ to model the state dynamics of arm $j$. These $J$ processes are assumed  mutually independent.
At each time $k \geq 0$, one of the $J$ arms is selected. A state-dependent reward  
is then collected and the state of the arm changes. On the other hand, the states of all the other arms remain the same.  

A policy (strategy) $\pi = (\pi (k))_{k \geq 0}$ models the arm to be selected at each time $k \geq 0$. It is adapted in the sense that $\pi(k)$ must be determined based only on the information collected until time $k$. 
%
%
%
Let $\sigma^\pi_j(k)$ be the number of times arm $j \in \mathcal{J}$ has been selected before time $k \geq 0$ under policy $\pi$.  At time $k$, immediately before making the decision, the state of the arms are $(X^{j}(\sigma_j^\pi(k)), j \in \mathcal{J})$. Using these, the decision maker needs to determine $\pi(k)$ and the reward from arm $\pi(k)$  
is collected. 
The counters $(\sigma^\pi_j; j \in \J)$ are updated as follows:
$\sigma^\pi_j(k+1) = \sigma^\pi_j(k)+1$ if $j = \pi(k)$ while $\sigma^\pi_j(k+1) = \sigma^\pi_j(k)$ if $j \neq \pi(k)$.

The objective of the MAB problem is to obtain an optimal policy that maximizes the total expected reward over an infinite-time horizon. Given a discount factor $0 < \beta < 1$ and a reward function $R^j: E^j \to \R$ for each $j \in \mathcal{J}$, the problem is to compute the value function
\begin{align}
v(\mathbf{x}) = \sup_{\pi}  \E_{\mathbf{x}}\left[\sum_{k=0}^\infty \beta^k 
 R^{\pi(k)}(X^{\pi(k)}(\sigma^\pi_{\pi(k)}(k)))
  \right], \label{discrete_time_model}
\end{align}
for $\mathbf{x} = (x^1, \ldots, x^J) \in E^1 \times \cdots \times E^J$ and obtain an optimal strategy $\pi^*$. Here,  $\E_{\mathbf{x}}$ is the conditional expectation when $X^j(0) = x^j$ for $j \in \J$.

In this problem, there exists a simple policy, known as the Gittins index
policy, that works optimally in every instance
of the MAB problem. 
It is optimal to choose the arm with the highest Gittins index, i.e.\ to select, at each $k \geq 0$,
\begin{align*}
\pi^*(k) = \arg \max_{j \in \mathcal{J}} \Gamma_j (X^j(\sigma_j^\pi(k))),
\end{align*}
where the Gittins index of arm $j$ at state $x$ is a ``normalized maximal reward''
\begin{align}
\Gamma_j(x) := \sup_{\tau \geq 1} \frac {\E_{x}^j \left[ 
\sum_{k=0}^{\tau-1} \beta^k R^j(X^j(k)) \right]}
{\E_{x}^j \left[  \sum_{k=0}^{\tau-1} \beta^k \right]}, \label{gittins_index_discrete_time}
\end{align}
which can be computed without any knowledge about the other arms. Here, $\tau$ is chosen over all stopping times greater than or equal to $1$ with respect to the filtration generated by
$(X^j(k))_{k \geq 0}$, and $\E^j_x$ is the expectation operator of the law of $\p^j_x$ under which $X^j(0) = x$ \emph{assuming arm $j$ is always selected}.

The Gittins index \eqref{gittins_index_discrete_time} can be derived by computing the unique value of $\gamma$ such that the value of the optimal stopping problem 
\begin{align}
\sup_{\tau \geq 1 }\E_x \Big[ \sum_{k=0}^{\tau-1} \beta^k \left(R^j(X^j(k)) -\gamma\right)\Big] \label{opt_stopping_discrete}
\end{align} becomes zero; see \cite[Eqn.\ (20)]{Mandelbaum88}.

\subsection{Continuous-time bandits} \label{subsection_cont_time_bandit}
In the continuous-time model, the state process $X^j = (X^j(t))_{t \geq 0}$, for  $j \in \J$,  is an $E^j$-valued continuous-time process defined on a probability space $(\Omega^j, \mathcal{F}^j, \p^j)$. Again, these processes are assumed mutually independent.

The evolution of the arms is given by 
\[
\mathbf{X} := (\mathbf{X}(\mathbf{s}) = (X^1(s_1),\ldots, X^J(s_J)), \mathbf{s} = (s_1, \ldots, s_J) \geq \mathbf{0}),
\]
which is adapted to the multiparameter filtration $(\mathcal{F}(\mathbf{s}) = \mathcal{F}^1(s_1) \vee \cdots \vee \mathcal{F}^J(s_J))_{\mathbf{s} \geq 0}$ where $(\mathcal{F}^j(t))_{t \geq 0}$ is the natural filtration generated by $X^j$ for $j \in \mathcal{J}$.  For more details, we refer the reader to \cite{KM95}.

In the continuous-time model, a strategy is given by a $J$-dimensional process 
\[
\mathbf{T} = (\mathbf{T}(t) = (T^1(t), \ldots, T^J(t)); t \geq 0)
\] 
where $T^j(t)$ is the amount of time allocated to arm $j$ until $t$. Naturally,  $\mathbf{T}(t)$ is nondecreasing, and
$T^1(t) + \cdots + T^J(t) = t$ for all $t \geq 0$. A strategy has to be adapted and hence
\[
\left\{ T^1(t) \leq s_1, \ldots, T^J(t) \leq s_J\right\} \in \mathcal{F}(\mathbf{s})
\]
for all $t \geq 0$ and $\textbf{s} = (s_1, \ldots, s_J) \in [0,\infty)^J$.

Given a discount factor $q > 0$ and a reward function $r^j:E^j \to \R$, $j \in \J$, the objective is to maximize the total discounted expected reward
\[
v(\mathbf{x}) = \sup_{\mathbf{T}} \E_{\mathbf{x}} \left[ \int_0^\infty e^{-q t} \sum_{j \in \mathcal{J}} r^j \left(X^j(T^j(t)) \right) \diff  T^j(t)  
\right]
\]
over all allocation strategies $\mathbf{T}$. Regarding the reward function, for the problem to be well-defined, it is assumed that
\begin{align}
\E_x^j \left[ \int_0^\infty e^{-q t} |r^j (X^j(t))| \diff t \right] < \infty, \quad j \in \mathcal{J}, \; x \in E_j \label{integrability_r}
\end{align}
where again $\E^j_x$ is the expectation operator of the law $\p^j_x$ of $X^j$ under which $X^j(0) = x$ assuming arm $j$ is always selected.

The Gittins index in the continuous-time model is analogous to \eqref{gittins_index_discrete_time} where the summations are replaced by integrals:
\begin{align}
\Gamma_j(x) := \sup_{\tau > 0} \frac {\E_{x}^j \left[ 
\int_0^\tau e^{-qt} r^j(X^j(t)) \diff t \right]}
{\E_{x}^j \left[ \int_0^\tau e^{-qt} \diff t \right]}, \quad j \in \J, \; x \in E_j. \label{gittins_index_continuous_time}
\end{align}
Similar to the discrete-time counterpart,  it is optimal to choose the arm with the highest Gittins index, although some technical details are needed to deal with the times at which multiple Gittins indices coincide.

\subsection{\lev bandits} As a special case of the continuous-time model, in Kaspi and Mandelbaum \cite{KM95}, an explicit expression of the Gittins index \eqref{gittins_index_continuous_time} was obtained under the following assumption: 
for each $j \in \J$,
\begin{enumerate}
\item the state process $X^j$ is an $\R$-valued \lev process ($E_j = \R$);
\item the reward function $r^j$ is increasing and continuous. 
\end{enumerate}


%
Under these assumptions, the Gittins index \eqref{gittins_index_continuous_time} for arm $j \in \J$  when its state is $x \in \R$ can be written concisely by
%
\[
\Gamma^j (x) = \int_{[0,\infty)} r^j (x + y) \mu^j(\diff y), \quad x \in \R,
\]
where $\mu^j$ is the probability measure determined by
\begin{align}  \label{mu_laplace2}
\int_{[0,\infty)} e^{-\theta y} \mu^j (\diff y) = \frac {\bar{\varphi}^j (q,0)} {\bar{\varphi}^j (q,\theta)}, \quad \theta \geq 0,
\end{align}
in terms of the Laplace exponent $\bar{\varphi}^j (q,\theta)$ of the inverse local time at maximum 
and the ascending ladder height process $(L^{-1}(t), H(t))_{t \geq 0}$
(see Section \ref{section_classical_case_SN}). The above characterization in terms of \eqref{mu_laplace2} is given in (4.5) of \cite{KM95}, which is derived from an alternative expression (3.3) of \cite{KM95} via the Wiener-Hopf factorization.


The optimal value function can be written in terms of the Gittins index function. We have
\[
v(\mathbf{x}) = \E_{\mathbf{x}} \left[ 
 \int_0^\infty e^{-q t} \bigvee_{j \in \J} \Gamma^j [\underline{X}^j(T^{*,j}(t))] \diff t  \right]
\]
where $\underline{X}^j(t) := \inf_{0 \leq u \leq t} X^j(u)$ is the running infimum process and $(T^{*,j}; j \in \J)$ is the allocation time under the Gittins index policy.

%

\section{Periodic observation models} \label{section_problem}

We now consider a version of the continuous-time model where decision times arrive at exponential times. 
Let $N = (N(t))_{t \geq 0}$ be a Poisson process with rate $\lambda > 0$, independent of $X^j$ for all $j \in  \J$, with arrival times  $\mathcal{T} := \{T_1,T_2,\ldots \}$ (hence $T_{k+1} - T_{k}$, $k \geq 0$, is an independent exponential time with  mean $\lambda^{-1}$ where $T_0 := 0$ for convenience). We assume the decision times are given by $\mathcal{T} \cup \{0\}$.  For each $T_k$, $k \geq 0$, one arm is selected. If arm $j$ is selected at $T_k$, its state changes as a \lev process according to the law $\p^j$ until the next decision time $T_{k+1}$, while the states of the other arms remain the same during the period.

We consider two variations of the problem
when $X^j$ is a one-dimensional \lev process  for each $j \in \mathcal{J}$.
 
 \subsection{Problem 1}
 With a discount factor $q > 0$ and a reward function $R^j: \R \to \R$, we consider the maximization problem:
\begin{align*}
v(\mathbf{x}) =
\sup_{\pi} \E_{\mathbf{x}}\left[  \sum_{k=0}^\infty e^{-q T_k}  
 R^{\pi(k)}(X^{\pi(k)}(\sigma^\pi_{\pi(k)}(k)))  \right],
\end{align*}
which can be seen as a version of the discrete-time model \eqref{discrete_time_model} where the deterministic decision times $\mathbb{N} \cup \{0\}$ are replaced by the Poisson arrival times (in addition to zero) $\mathcal{T} \cup \{0\}$. 
Here, analogously to \eqref{discrete_time_model}, $\pi(k)  \in \J$ denotes the arm selected at time $T_k$ and  
\[
\sigma^\pi_j(k) := \sum_{i=0}^{k-1} (T_{i+1} - T_i) 1_{\{
 \pi(i) = j
\}}\]
 is the amount of times arm $j$ has been active before $T_k$ under policy $\pi$.
The selection $\pi(k)$ must be determined based only on the information collected right before making decisions, i.e.\, 
\begin{align}
T_1, \ldots, T_k \quad \textrm{and} \quad X^j(t), t \leq \sigma^\pi_j(k) \; \textrm{for all} \; j \in \J.
\label{filtration_temp}
\end{align}



Analogous to what is assumed in \cite{KM95}, we assume the following.
\begin{assump} \label{assump_R_increasing}
For each $j \in \J$, we assume
\begin{enumerate}
\item the reward function $R^j$ is strictly increasing,
\item We have
$\E_x^j \Big[ \sum_{k=0}^\infty e^{-q T_k} |R^j(X^j(T_k)| \Big]<\infty$ for  $x\in\R$,
where $\E_x^j$ is the same as that in the continuous-time model in Section \ref{subsection_cont_time_bandit}.
\end{enumerate}
\end{assump}

\subsection{Problem 2}

As a special case of Problem 1, we also consider a variant of the continuous-time model reviewed in Section \ref{subsection_cont_time_bandit}, where the change of selection can be made only at $\mathcal{T} \cup \{0\}$. At each $T_{k}$, an arm is selected and it is locked in until $T_{k+1}$.  The difference from Problem 1 is that the reward depends on the path between the decision times. This can be modeled as a maximization problem
\[
v(\mathbf{x}) = \sup_{\pi}  v_\pi(\mathbf{x}),
\]
with
\[
v_\pi(\mathbf{x}) =\E_{\mathbf{x}} \left[ \int_0^\infty e^{-q t} \sum_{j \in \mathcal{J}} r^j \left(X^j(S_j^\pi(t)) \right) \diff  S_j^\pi(t)  \right],
\]
where, for each $j \in \mathcal{J}$, $r^j: \R \to \R$ and $S_j^\pi(t)$ is the amount of time at which $j$ has been active until time $t$ under $\pi$. For $T_k \leq t < T_{k+1}$, $S_{\pi(k)}^\pi(t) = 
\sigma^\pi_{\pi(k)}(k)
+(t-T_k)$, and hence
%
%
%
\[
v_\pi(\mathbf{x}) = \E_{\mathbf{x}} \left[  \sum_{k=0}^\infty \int_{T_{k}}^{T_{k+1}} e^{-qt} r^{\pi(k)}\left(X^{\pi(k)}(\sigma^\pi_{\pi(k)}(k)+(t-T_k)) \right) \diff t   \right].
\]
With $\mathcal{G}(T_k)$ all information observed until $T_k$ under $\pi$ (i.e.\  \eqref{filtration_temp}) and $\textrm{e}_\lambda$ an independent exponential random variable with mean $\lambda^{-1}$,
noting that $\pi(k) \in \mathcal{G}(T_k)$ and because $T_{k+1}-T_k \sim \textrm{e}_\lambda$ and is independent of $\mathcal{G}(T_k)$, the strong Markov property gives
\begin{align} \label{v_pi}
\begin{split}
v_\pi(\mathbf{x}) &= \E_{\mathbf{x}} \left[ \sum_{k=0}^\infty \E_{\mathbf{x}} \left( \int_{T_{k}}^{T_{k+1}} e^{-qt} r^{\pi(k)}\left(X^{\pi(k)}(\sigma^\pi_{\pi(k)}(k)+(t-T_k))\right) \diff t \Big| \mathcal{G}(T_k)\right) \right] \\
&= \E_{\mathbf{x}} \left[ \sum_{k=0}^\infty e^{-qT_k}\E_{\mathbf{x}} \left( \int_{0}^{\textrm{e}_\lambda} e^{-qt} r^{\pi(k)}\left(X^{(\pi(k))}(\sigma^\pi_{\pi(k)}(k)+t)\right) \diff t \Big| \mathcal{G}(T_k) \right) \right] \\
&= \E_{\mathbf{x}} \Big[ \sum_{k=0}^\infty e^{-q T_k} R^{\pi(k)}(X^{\pi(k)}(\sigma^\pi_{\pi(k)}(k))) \Big],
\end{split}
\end{align}
where
\begin{align} \label{connection_prob_0_1}
R^{j}(x) := 
\E_{x}^j \left[ \int_{0}^{\textrm{e}_\lambda} e^{-qt} r^{j}(X^{j}(t)) \diff t \right]
= \E_{x}^j \left[ \int_{0}^{\infty} e^{-(q+\lambda)t} r^{j}(X^{j}(t)) \diff t \right], \quad j \in \J \; \textrm{and} \; x \in \R.
\end{align}
Hence, this is a special case of Problem 1 with this choice of the reward function.

For Problem 2, we assume below, following what are assumed in  \cite{KM95}. 
\begin{assump} \label{assump_prob1}
For each $j \in \J$, we assume
\begin{enumerate}
\item the reward function $r^j$ is strictly increasing and continuous.
\item the integrability condition \eqref{integrability_r} is satisfied. 
\end{enumerate}
\end{assump}
This assumption guarantees that \eqref{connection_prob_0_1} is finite and Assumption \ref{assump_R_increasing} is fulfilled.

\subsection{Gittins index}

We define the Gittins index for our problems, analogously to those in the classical case.
For simplicity, we fix an arm $j \in \J$ and omit the index in an obvious manner (e.g.\ $R \equiv R^{j}$ and $X \equiv X^{j}$). Here, we follow similar steps as \cite[Section 3.6]{Mandelbaum88} to derive our Gittins index.

Recall that the Gittins index in the discrete-time case \eqref{gittins_index_discrete_time} is derived by considering the optimal stopping problem \eqref{opt_stopping_discrete} parameterized by $\gamma$.
Thus, it is natural to consider an analogous index in the Poissonian decision time settings, by considering \eqref{opt_stopping_discrete} with the deterministic decision times replaced by Poisson arrival times.  


 For each $\gamma \in \R$, consider the following auxiliary optimal stopping problem
\begin{align*}
v(x,\gamma)=\sup_{M\geq 1}\E_x \left[ \sum_{k=0}^{M-1} e^{-q T_k} \left(R(X(T_k)) -\gamma\right)\right],
\end{align*}
where $M$ is selected from a set of stopping times greater than or equal to $1$ with respect to the filtration $\mathbb{A}:=\left(\mathcal{A}_k\right)_{k\geq0}$ with $\mathcal{A}_k:=\sigma\left(T_k, 1 \leq i \leq k ,X(t), t \leq T_k \right)$, i.e. we stop at $T_M$ for an $\mathbb{A}$-stopping time $M$.


\begin{lemma} \label{lemma_convexity}
For $x\in\R$, the mapping $\gamma\mapsto v(x,\gamma)$ is strictly decreasing and convex. 
\end{lemma}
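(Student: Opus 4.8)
The plan is to establish both properties directly from the definition of $v(x,\gamma)$ as a supremum of affine functions of $\gamma$. The key observation is that for each fixed stopping time $M \geq 1$, the quantity
\[
g_M(\gamma) := \E_x \left[ \sum_{k=0}^{M-1} e^{-q T_k} \left(R(X(T_k)) -\gamma\right)\right] = \E_x \left[ \sum_{k=0}^{M-1} e^{-q T_k} R(X(T_k)) \right] - \gamma \, \E_x \left[ \sum_{k=0}^{M-1} e^{-q T_k} \right]
\]
is an affine function of $\gamma$, with slope $-\E_x \big[ \sum_{k=0}^{M-1} e^{-q T_k} \big]$. Since $v(x,\gamma) = \sup_{M \geq 1} g_M(\gamma)$, it is a pointwise supremum of affine functions, hence convex in $\gamma$; this gives convexity immediately.

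For strict monotonicity, first I would note that each slope is strictly negative. Indeed, since $M \geq 1$ we always have the $k=0$ term present, so $\E_x \big[ \sum_{k=0}^{M-1} e^{-q T_k} \big] \geq \E_x[e^{-q T_0}] = 1 > 0$ because $T_0 = 0$. Thus every $g_M$ is strictly decreasing with slope bounded above by $-1$. For $\gamma_1 < \gamma_2$, choosing a near-optimal $M$ for $\gamma_1$ and using the uniform slope bound gives
\[
v(x,\gamma_1) \geq g_M(\gamma_1) = g_M(\gamma_2) + (\gamma_2 - \gamma_1)\,\E_x \Big[ \sum_{k=0}^{M-1} e^{-q T_k} \Big] \geq g_M(\gamma_2) + (\gamma_2 - \gamma_1),
\]
and combined with $g_M(\gamma_2) \geq v(x,\gamma_2) - \varepsilon$ for suitable $M$ this yields $v(x,\gamma_1) \geq v(x,\gamma_2) + (\gamma_2-\gamma_1) - \varepsilon$ for arbitrary $\varepsilon > 0$, hence strict monotonicity with the explicit gap $v(x,\gamma_1) - v(x,\gamma_2) \geq \gamma_2 - \gamma_1$.

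The main point requiring care is ensuring $v(x,\gamma)$ is finite (so the supremum is not identically $+\infty$, and the affine comparison is meaningful). This follows from Assumption \ref{assump_R_increasing}(2): the reward contribution is dominated in absolute value by the summable quantity $\E_x^j \big[ \sum_{k=0}^\infty e^{-q T_k} |R^j(X^j(T_k))| \big] < \infty$, while the $\gamma$-contribution is controlled since $\E_x \big[ \sum_{k=0}^{\infty} e^{-q T_k} \big] = \sum_{k=0}^\infty (\lambda/(q+\lambda))^k = (q+\lambda)/q < \infty$ using that $T_k$ is a sum of $k$ i.i.d.\ exponentials and $\E[e^{-qT_k}] = (\lambda/(q+\lambda))^k$. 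With finiteness secured, the supremum-of-affine-functions argument is entirely routine, and I do not expect any genuine obstacle beyond this bookkeeping.
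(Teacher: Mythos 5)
Your proof is correct and takes essentially the same route as the paper's: both exploit that the objective is affine in $\gamma$ for each fixed $M$ (so the supremum is convex), use the $k=0$ term with $T_0=0$ to bound the slope above by $-1$ for strict decrease, and secure finiteness via Assumption \ref{assump_R_increasing}(2) together with $\E_x\big[\sum_{k=0}^{\infty}e^{-qT_k}\big]=(\lambda+q)/q<\infty$. The only blemish is the phrase ``near-optimal $M$ for $\gamma_1$'': since $v(x,\gamma_1)\geq g_M(\gamma_1)$ holds for every $M$, the $M$ you actually need is one that is near-optimal for $\gamma_2$, which is what your final inequality already uses.
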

\begin{proof}

For $\overline{\gamma} > \underline{\gamma}$ and an $\mathbb{A}$-stopping time $M \geq 1$,
\begin{align*}
	\E_x \Big[ \sum_{k=0}^{M-1} e^{-q T_k} \left(R(X(T_k)) -\underline{\gamma}\right)\Big] = (\overline{\gamma} - \underline{\gamma}) + \E_x \Big[ \sum_{k=0}^{M-1} e^{-q T_k} \left(R(X(T_k)) - \overline{\gamma}\right) \Big].
\end{align*}	
After maximizing both sides over $M$, the decreasing property is immediate.


Regarding the convexity, for $\delta \in[0,1]$, $\gamma_1,\gamma_2\in\R$, and  $\mathbb{A}$-stopping time $M$,
\begin{multline*}
v_M(x,\delta \gamma_1+(1-\delta)\gamma_2) := \E_x \Big[ \sum_{k=0}^{M-1} e^{-q T_k} \left[R(X(T_k)) -\left(\delta \gamma_1+(1-\delta )\gamma_2\right)\right]\Big] \\
=\delta\E_x \Big[ \sum_{k=0}^{M-1} e^{-q T_k} \left[R(X(T_k)) -\gamma_1\right]\Big]+(1-\delta)\E_x \Big[ \sum_{k=0}^{M-1} e^{-q T_k} \left[R(X(T_k)) -\gamma_2\right]\Big]
\end{multline*}
where the decomposition makes sense from Assumption \ref{assump_R_increasing}(2) together with the fact that
$\E_x \left[ \sum_{k=0}^\infty e^{-q T_k}\right]=\E_x [ 1+\int_{(0,\infty)}e^{-qs}\diff N(s) ]=(\lambda+q)/q <\infty$.	
This gives 
$v_M(x,\delta \gamma_1+(1-\delta)\gamma_2) \leq \delta v(x,\gamma_1)+(1-\delta)v(x,\gamma_2)$ and hence
 $v(x,\delta \gamma_1+(1-\delta)\gamma_2)\leq \delta v(x,\gamma_1)+(1-\delta)v(x,\gamma_2)$, showing the convexity.
	
\end{proof}

Fix $x \in \R$. By Lemma \ref{lemma_convexity} we have that the mapping $\gamma\mapsto v(x,\gamma)$ is continuous, and monotone convergence gives $\lim_{\gamma \downarrow -\infty}v(x,\gamma)=\infty$ and $\lim_{\gamma \uparrow \infty} v(x,\gamma)=-\infty$. 
Hence, $v(x,\cdot)$ has a unique root, which we denote by $\Gamma(x)$ such that
\begin{align} \label{v_gamma_relation}
\begin{split}
v(x,\gamma) > 0 &\Longleftrightarrow \gamma < \Gamma(x), \\
v(x,\gamma) = 0 &\Longleftrightarrow \gamma = \Gamma(x), \\
v(x,\gamma) < 0 &\Longleftrightarrow \gamma > \Gamma(x).
\end{split}
\end{align}
We call $\Gamma(x)$ the Gittins index for our problem.

By these obtained characteristics of the optimal stopping problem, following the same arguments in Section 3.6 of \cite{Mandelbaum88}, we derive our Gittins index in the form analogous to \eqref{gittins_index_discrete_time}. 

By Snell's optimal-stopping theory (see, e.g., \cite{Snell}) we have that the stopping rule that attains $v(x,\gamma)$ is given by
\begin{align}\label{stopping_rule}
\begin{split}
\tau(x,\gamma):&=\inf\{m\geq 1: v(X(T_m),\gamma)\leq 0\} \\
&=\inf\{m\geq 1:\Gamma(X(T_m))\leq \gamma \}, 
\end{split}
\end{align}
where the latter holds by 
\eqref{v_gamma_relation}.



Therefore, for any $\mathbb{A}$-stopping time $M' \geq 1$,
\begin{align} \label{about_v_M}
\begin{split}
0=v(x,\Gamma(x))&=\sup_{M\geq 1}\E_x \Big[ \sum_{k=0}^{M-1} e^{-q T_k} \left[R(X(T_k)) -\Gamma(x)\right]\Big]\geq \E_x \Big[ \sum_{k=0}^{M'-1} e^{-q T_k} \left[R(X(T_k)) -\Gamma(x)\right]\Big]\\
&= \E_x \Big[ \sum_{k=0}^{M'-1} e^{-q T_k} R(X(T_k))\Big]-\Gamma(x)\E_x \Big[ \sum_{k=0}^{M'-1} e^{-q T_k} \Big],
\end{split}
\end{align}
and thus
\begin{align*}
\Gamma(x)\geq \frac{\E_x \Big[ \sum_{k=0}^{M'-1} e^{-q T_k} R(X(T_k))\Big]}{\E_x \Big[ \sum_{k=0}^{M'-1} e^{-q T_k}\Big]}.
\end{align*}
On the other hand, by \eqref{stopping_rule},  the inequality \eqref{about_v_M} holds with equality when $M'$ equals
\[
\tau(x,\Gamma(x))=\inf\{m\geq 1:\Gamma(X(T_m))\leq \Gamma(x) \}.
\]
 This implies that
\begin{align}\label{gittins_temp}
\Gamma(x)=\sup_{M \geq 1} \frac{\E_x \Big[ \sum_{k=0}^{M-1} e^{-q T_k} R(X(T_k))\Big]}{\E_x \Big[ \sum_{k=0}^{M-1} e^{-q T_k} \Big]}= \frac{\E_x \Big[ \sum_{k=0}^{\tau(x,\Gamma(x))-1 }e^{-q T_k} R(X(T_k))\Big]}{\E_x \Big[ \sum_{k=0}^{\tau(x,\Gamma(x)) -1} e^{-q T_k} \Big]}.
\end{align}
Note that both the numerator and denominator are finite by Assumption \ref{assump_R_increasing}(2).

We now write \eqref{gittins_temp} more explicitly. By Assumption  \ref{assump_R_increasing} and because $X$ is spatially homogeneous, $\Gamma(x)$ is strictly increasing in $x$ and thus 
\begin{equation}\label{tau}
\tau(x,\Gamma(x))=\inf\{m\geq 1:X(T_m) \leq x \}.
\end{equation}
Because $N$ is a Poisson process with arrival times $\mathcal{T} := (T_i)_{i\in\mathbb{N}}$, using
\[
T_x^- := T_{\tau(x,\Gamma(x))} = \inf \{ S \in \mathcal{T}: 
X(S) \leq x\},
\]
we have
\begin{align*}
\E_x \left[ \sum_{k=0}^{\tau(x,\Gamma(x))-1} e^{-q T_k} R(X(T_k))\right]&= R(x)+\E_x \left[ \int_{(0,T_x^-]} e^{-q s} R(X(s)) \diff N(s)\right] - \E_x \left[ e^{-q T_x^-} R(X(T_x^-))\right]\notag\\
&= R(x)+\E_x \left[\int_{(0,T_x^-]} e^{-q s} R(X(s)) 1_{\{ X(s) > x \}} \diff   N(s)\right] \notag\\
&=R(x)+\lambda\E_x \left[ \int_0^{T_x^-} e^{-q s} R(X(s)) 1_{\{ X(s) > x \}}  \diff  s\right].
\end{align*}
Here, the second equality holds because if $T_i < T_x^-$ then $X(T_i) > x$ for all $i \geq 1$ whereas $X(T_x^-) \leq x$ by the definition of $T_x^-$. The last equality holds by compensation formula.

We hence have the following.

\begin{theorem} 
The Gittins index $\Gamma$ for Problem 1 is 
\begin{align}
	\Gamma(x)= \frac{\displaystyle R(x)+\lambda\E_x\left[\int_0^{T_x^-}e^{-qs}R(X(s))1_{\{X(s)>x\}}\diff s \right]}{\displaystyle 1+\lambda\E_x\left[\int_0^{T_x^-}e^{-qs}1_{\{X(s)>x\}}\diff s \right]}, \quad x \in \R. \label{gittins}
\end{align}
\end{theorem}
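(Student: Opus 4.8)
The plan is to start from the ratio representation \eqref{gittins_temp} of the Gittins index, which has already been obtained from the optimal-stopping characterization \eqref{v_gamma_relation} together with the Snell-envelope description \eqref{stopping_rule} of the optimal stopping rule, and to evaluate its numerator and denominator in closed form. The first step is to record that, under Assumption \ref{assump_R_increasing}(1) and the spatial homogeneity of $X$, the map $x \mapsto \Gamma(x)$ is strictly increasing, so that the optimal rule reduces to the first-passage rule \eqref{tau}; accordingly $T_x^- = T_{\tau(x,\Gamma(x))}$ is the first arrival time at which $X$ drops to or below $x$.

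Next I would evaluate the numerator $\E_x[\sum_{k=0}^{\tau(x,\Gamma(x))-1} e^{-qT_k} R(X(T_k))]$. Splitting off the $k=0$ term gives $R(x)$ because $T_0 = 0$ and $X(0) = x$, and the remaining terms run over the arrival times $T_1, \dots, T_{\tau-1}$, all lying strictly before $T_x^-$. Rewriting these as an integral against the jump measure $\diff N$ over $(0, T_x^-)$ and using that $X(T_i) > x$ for every arrival $T_i < T_x^-$ while $X(T_x^-) \le x$, I can insert the indicator $1_{\{X(s) > x\}}$ and extend the domain to $(0, T_x^-]$ without change, since the endpoint contribution vanishes. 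The compensation formula for the rate-$\lambda$ Poisson process $N$ then converts $\diff N(s)$ into $\lambda \diff s$, yielding the numerator displayed in \eqref{gittins}; this is precisely the computation carried out just above the statement.

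The denominator $\E_x[\sum_{k=0}^{\tau(x,\Gamma(x))-1} e^{-qT_k}]$ is handled by the identical argument with $R \equiv 1$: the $k=0$ term contributes $1$ and the compensation formula gives $\lambda \E_x[\int_0^{T_x^-} e^{-qs} 1_{\{X(s)>x\}} \diff s]$ for the remainder. Taking the quotient of the two expressions produces \eqref{gittins}, and both numerator and denominator are finite by Assumption \ref{assump_R_increasing}(2), so the ratio is well defined.

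The step that needs the most care is the application of the compensation formula, i.e.\ the identity $\E_x[\int_{(0,T_x^-]} H(s) \diff N(s)] = \lambda \E_x[\int_0^{T_x^-} H(s) \diff s]$ for $H(s) = e^{-qs} R(X(s)) 1_{\{X(s)>x\}}$. Its justification rests on two observations: first, $N$ is independent of $X$, so that an arrival time almost surely avoids the (countably many) jump times of $X$, whence $X(s) = X(s-)$ there and $H$ agrees with its predictable version at the jumps of $N$; and second, the event $\{s \le T_x^-\}$ is determined by the arrivals strictly before $s$, so that the truncated integrand is predictable and a stopped-martingale argument applies once finiteness is guaranteed by Assumption \ref{assump_R_increasing}(2). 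I expect this predictability-plus-integrability verification to be the only genuine obstacle; the remaining manipulations are bookkeeping.
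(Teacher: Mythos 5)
Your proposal is correct and follows essentially the same route as the paper: starting from \eqref{gittins_temp}, reducing the optimal rule to \eqref{tau} via monotonicity of $\Gamma$, splitting off the $k=0$ term, inserting the indicator $1_{\{X(s)>x\}}$ using $X(T_i)>x$ for $T_i<T_x^-$ versus $X(T_x^-)\le x$, and applying the compensation formula, with the denominator obtained by setting $R\equiv 1$. The extra care you devote to justifying the compensation formula (independence of $N$ and $X$, predictability of the truncated integrand) is a sound elaboration of a step the paper simply cites.
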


The Gittins index for Problem 2 can be written by substituting \eqref{connection_prob_0_1} in \eqref{gittins}.




\begin{remark} \label{remark_lambda0}
As $\lambda \to 0$,  the Gittins index $\Gamma(x)$ converges to the instantaneous reward $R(x)$ for all $x \in \R$.
In particular, it converges to the $q$-resolvent 
$\E_{x} \left[ \int_{0}^{\infty} e^{-qt} r(X(t)) \diff t \right]$
for Problem 2. These results are consistent because, as $\lambda$ goes to zero, the future rewards vanish. 
\end{remark}



\section{Spectrally one-sided cases} \label{section_SN}

In this section, we focus on the spectrally negative and positive cases and compute the Gittins index \eqref{gittins} in terms of the scale function. Recall that a spectrally negative \lev process is  a L\'evy process without positive jumps that does not have monotone paths a.s. The spectrally positive \lev process is its dual process. 

\subsection{Review of fluctuation theory for spectrally negative \lev processes} \label{section_fluctuation_identities}
 We first review the fluctuation theory and scale function of the spectrally negative \lev process.

Let $(Y(t))_{t \geq 0}$ be a spectrally negative \lev process defined on a probability space $(\Omega, \mathcal{F}, \p)$.
We denote its Laplace exponent by
\begin{align} \label{laplace_exponent}
\psi(\theta) := \log \e\big[{\rm e}^{\theta Y(1)}\big], \qquad \theta\ge 0,
\end{align}
which is known to be convex on $[0, \infty)$ and admits its right-inverse
		\begin{align}
				\Phi(q) := \sup \{ s \geq 0: \psi(s) = q\}, \quad q \geq 0.
			\label{def_varphi}
		\end{align}
For $y \in \mathbb{R}$, let $\p_y$ be the conditional probability under which $Y(0) = y$ and $\E_y$ be its expectation operator. We omit the subscript when $y = 0$. 


Fix $q \geq 0$. The $q$-scale function $W^{({q})}$ is a mapping from $\R$ to $[0, \infty)$ that takes value zero on $(-\infty,0)$, while on $[0,\infty)$ it is a continuous and strictly increasing function with the Laplace transform
		\begin{align} \label{new_4_a}
			\begin{split}
				\int_0^\infty  \mathrm{e}^{-\theta x} W^{({q})}(x) \diff x &= \frac 1 {\psi(\theta)-q}, \quad \theta > \Phi({q}).
			\end{split}
		\end{align}
Define also the \emph{second scale function} 
\begin{align*} 
Z^{({q})}(x; \theta ) &:=e^{\theta x} \Big( 1 + (q- \psi(\theta )) \int_0^{x} e^{-\theta  z} W^{(q)}(z) \diff z	\Big), \quad x \in \R, \, \theta  \geq 0.
\end{align*}
In particular, for $x \in \R$ and $\lambda > 0$,
\begin{align*} 
	\begin{split}
		Z^{(q)}(x; \Phi(q+\lambda)) &=e^{\Phi(q+\lambda) x} \Big( 1 -\lambda \int_0^{x} e^{-\Phi(q+\lambda) z} W^{(q)}(z) \diff z \Big),
	\end{split}
\end{align*}
which can also be written by \eqref{new_4_a},
\begin{align} \label{Z_special_2}
	\begin{split}
		Z^{(q)}(x; \Phi(q+\lambda)) &=e^{\Phi(q+\lambda) x} \lambda \int_x^{\infty} e^{-\Phi(q+\lambda) z} W^{(q)}(z) \diff z = \lambda \int_0^{\infty} e^{-\Phi(q+\lambda) z} W^{(q)}(z+x) \diff z.
	\end{split}
\end{align}

There are a number of applications of the scale function.  For example, the $q$-resolvent can be written
\begin{align}\label{resol_sn}
\E_y\left[\int_0^{\infty}e^{-qs} 1_A(Y(s))\diff s \right]=\int_{\R} 1_A(y+u) g^{(q)}(u)\diff u,
\end{align}
for any Borel set $A$ on $\R$ where
\begin{equation}\label{resolvent_class}
g^{(q)}(u):=\frac{e^{-\Phi(q)u}}{\psi'(\Phi(q))}-W^{(q)}(-u),\qquad u\in\R;
\end{equation}
see \cite[Theorem 2.7 (iv)]{KKR}.



The joint Laplace transform of the first passage time
\begin{equation}\label{tauzero}\tau_0^-:=
\inf\{t > 0: Y(t)<0\} \end{equation}
and the overshoot $Y(\tau_0^-)$
is given by the following identity
\begin{align}\label{der_gamma_0_0}
\begin{split}
H^{(q)}(y; \theta) &:= \E_{y} \Big[ e^{-q \tau_0^- + \theta Y(\tau_0^-)} \mathbf{1}_{\{\tau_0^-< \infty \}}\Big]
=Z^{({q})}(y;\theta)-\frac{\psi(\theta)-q}{\theta-\Phi(q)}W^{({q})}(y), \quad y\in\R, \theta\geq 0;
\end{split}
\end{align}
see, e.g.,  Eqn.\ (4.5) in \cite{Kyprianou}.
In particular,
\[
H^{(q)}(y;\Phi(q+\lambda))
=Z^{(q)}(y;\Phi(q+\lambda))-\frac{\lambda}{\Phi(q+\lambda)-\Phi(q)}W^{(q)}(y), \quad y \in \R.
\]

Similar results have been obtained for the Poisson observation case.
Recall that $\mathcal{T} := (T_n)_{n \geq 1}$ is
the set of jump times of an independent Poisson process with rate $\lambda$. We define for $z \in \R$
\begin{align*}
\tilde{T}_{z}^-  := \inf \left\{ S \in \mathcal{T} : Y(S) < z \right\} \quad \textrm{and} \quad 
\tilde{T}_{z}^+  := \inf \left\{ S \in \mathcal{T} : Y(S) > z \right\}.
\end{align*}

Using  \cite[Theorem B.1]{PPSY} and \cite[Theorem 3.1]{L}, we have the following.

\begin{lemma} \label{thm_resolvent} 
Let $A$ be a Borel set on $\R$.

(1) We have
	\begin{align*}
		\E \left[ \int_0^{\tilde{T}_0^-  } e^{-qt} 1_A(Y(t))  \diff t \right]= \frac{\Phi(q+\lambda)-\Phi(q)}{\lambda} \int_{A}
		 H^{(q+\lambda)}(-u;\Phi(q)) \diff u.
	\end{align*}
%
%
%
%
%
(2) We have
\begin{align*}
\E\left[\int_0^{\tilde{T}_0^+}e^{-qt}1_A(-Y(t))\diff t\right] =\frac{\Phi(q+\lambda)-\Phi(q)}{\lambda}
\int_{A}H^{(q)}(u;\Phi(q+\lambda)) \diff u.
\end{align*}
\end{lemma}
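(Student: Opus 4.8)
The plan is to read both resolvent measures directly off the cited Poissonian first-passage identities and then to repackage the resulting scale-function expressions into the compact form involving $H^{(q)}$. Throughout, note that $\E = \E_0$, so only the case $Y(0)=0$ is needed, which is the simplest instance of the cited theorems. I would first record the algebraic identity that does the repackaging: substituting $q \to q+\lambda$ and $\theta = \Phi(q)$ into \eqref{der_gamma_0_0} and using $\psi(\Phi(q)) = q$ gives
\begin{align*}
H^{(q+\lambda)}(y;\Phi(q)) = Z^{(q+\lambda)}(y;\Phi(q)) - \frac{\lambda}{\Phi(q+\lambda)-\Phi(q)} W^{(q+\lambda)}(y),
\end{align*}
since $\tfrac{\psi(\Phi(q))-(q+\lambda)}{\Phi(q)-\Phi(q+\lambda)} = \tfrac{-\lambda}{\Phi(q)-\Phi(q+\lambda)} = \tfrac{\lambda}{\Phi(q+\lambda)-\Phi(q)}$; the analogous substitution $\theta = \Phi(q+\lambda)$ reproduces the display recorded just after \eqref{der_gamma_0_0}. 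These are exactly the combinations in which the prefactor $(\Phi(q+\lambda)-\Phi(q))/\lambda$ cancels against the coefficient of $W$, which is what makes the final form clean.

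For part (1), I would identify $\tilde T_0^-$ as the first Poisson observation at which $Y$ is strictly below $0$, which is precisely the stopping time treated in \cite[Theorem B.1]{PPSY}. Applying that theorem with the present $q$ and $\lambda$ yields an absolutely continuous resolvent measure whose density is expressed through $W^{(q+\lambda)}$, $Z^{(q+\lambda)}$ and the constants $\Phi(q),\Phi(q+\lambda)$. The second step is to substitute the identity above to collapse that density to $\frac{\Phi(q+\lambda)-\Phi(q)}{\lambda} H^{(q+\lambda)}(-u;\Phi(q))$, and then integrate over $A$. The reflected argument $-u$ appears because the density is evaluated at the occupied level $u$, while $H^{(q+\lambda)}(\,\cdot\,;\Phi(q))$ is naturally parameterized by the starting position of the Gerber--Shiu functional.

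For part (2), the upward analogue, I would invoke \cite[Theorem 3.1]{L}. It is illuminating to see this through the spectrally positive dual $\hat Y := -Y$: then $\{Y(S)>0\} = \{\hat Y(S)<0\}$, so $\tilde T_0^+$ for $Y$ coincides with the first Poisson observation below $0$ for $\hat Y$, and $1_A(-Y(t)) = 1_A(\hat Y(t))$; hence the left-hand side of (2) is the downward Poissonian resolvent of $\hat Y$. Reducing as in part (1) --- now with the roles of $q$ and $q+\lambda$ interchanged, so that the $q$-scale functions $W^{(q)},Z^{(q)}$ appear and the second identity after \eqref{der_gamma_0_0} is used --- produces $\frac{\Phi(q+\lambda)-\Phi(q)}{\lambda} H^{(q)}(u;\Phi(q+\lambda))$, and integration over $A$ finishes.

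I expect the main obstacle to be bookkeeping rather than any deep estimate: matching the raw scale-function densities from \cite[Theorem B.1]{PPSY} and \cite[Theorem 3.1]{L} to the definition \eqref{der_gamma_0_0} of $H$ (keeping track of whether $W^{(q)}$ or $W^{(q+\lambda)}$, and which argument of $\Phi$, belongs where), and, in part (2), verifying that the duality transform sends the upward-passage occupation of $Y$ to the downward-passage occupation of $\hat Y$ with the correct translation of the dual scale functions back into $W^{(q)}$ of $Y$. Once the parameter correspondence is pinned down, the remaining computation is the elementary cancellation exhibited in the identity above.
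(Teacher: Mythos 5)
Your proposal is correct and follows essentially the same route as the paper, which proves the lemma simply by invoking \cite[Theorem B.1]{PPSY} for part (1) and \cite[Theorem 3.1]{L} for part (2); your added bookkeeping --- in particular the substitution $\theta=\Phi(q)$ (resp.\ $\theta=\Phi(q+\lambda)$) into \eqref{der_gamma_0_0} with $\psi(\Phi(q))=q$ to produce the coefficient $\lambda/(\Phi(q+\lambda)-\Phi(q))$ of $W$ --- is exactly the translation the paper leaves implicit. No gap.
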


%

\begin{thebibliography}{99} 




\bibitem{Albrecher} Albrecher, H., Ivanovs, J., Zhou, X. Exit identities for L\'evy processes observed at Poisson arrival times.
{\it Bernoulli}, \textbf{22},  1364--1382, 2016.


	\bibitem{Avanzi_Cheung_Wong_Woo} Avanzi, B., Cheung, E. C., Wong, B., Woo, J.K. On a periodic dividend barrier strategy in the dual model with continuous monitoring of solvency.
{\it Insurance Math. Econom.}, \textbf{52}, 98--113, 2013. 





\bibitem{Be}  Bertoin, J. {\it L\'evy processes. }\rm Cambridge University Press, Cambridge, 1996.

\bibitem{Carr} Carr, P.  Randomization and the American put. {\it Rev. Financ. Stud.}, \textbf{11} (3), 597-626, 1998.

\bibitem{Czarna} Czarna, I., Palmowski, Z. Dividend problem with Parisian delay for a spectrally negative L\'evy risk process. {\it J. Optim. Theory Appl.}, {\bf 161(1)}, 239-256, 2014.

\bibitem{Gittins} Gittins, J.  A dynamic allocation index for the sequential design of experiments. {\it Progress in statistics}, 241-266, 1974.

\bibitem{ElKaroui_Karatzas}El Karoui, N., \& Karatzas, I.  Dynamic allocation problems in continuous time. \textit{Ann. Appl. Probab.}, 255-286, 1994.

\bibitem{KM95} Kaspi, H., Mandelbaum, A. Multi armed bandits driven by L\'evy processes. \textit{Ann. Appl. Probab.}  \textbf{5} 2, 541--565, 1995.


\bibitem{Mandelbaum88} Kaspi, H., Mandelbaum, A. Multi armed bandits in discrete and continuous time. \textit{Ann. Appl. Probab.}  \textbf{8} 4, 1270--1290, 1998.


\bibitem{KKR}  Kuznetsov, A., Kyprianou, A.E., \& Rivero, V. \rm The
theory of scale functions for spectrally negative L\'evy processes. {\it L\'evy Matters II, Springer Lecture Notes in Mathematics}, 2013.
\bibitem{K}  Kyprianou, A.E. {\it Fluctuations of L\'evy processes with applications.} \rm Springer,
Berlin, 2014.



\bibitem{Kyprianou} Kyprianou, A.E., Surya, B.A.:
Principles of smooth and continuous fit in the determination of endogenous bankruptcy levels.
{\it Finance Stoch.}, \textbf{11}, 131--152, 2007.


\bibitem{L} Landriault, D., Li, B., Wong, J. T., Xu D., Poissonian potential measures for L\'evy risk models,
\textit{Insurance Math. Econom.}, \textbf{82}, 152--166, 2018.

\bibitem{Leung} Leung, T., Yamazaki, K., \& Zhang, H. An analytic recursive method for optimal multiple stopping: Canadization and phase-type fitting. {\it  Int. J. Theor. Appl. Finance}, \textbf{18} (05), 1550032, 2015.


\bibitem{Lkabous}
Lkabous, M. A. Poissonian occupation times of spectrally negative L\'evy processes with applications. {\it Scandinavian Actuarial Journal}, {\bf 2021(10)}, 916-935, 2021.


\bibitem{MPY} Mata, D., P\'erez, J.L.\ and Yamazaki, K. \rm Effects of Positive Jumps of Assets on Endogenous Bankruptcy and Optimal
Capital Structure: Continuous- and Periodic-Observation Models.  {\it SIAM J Financial Math.}, \textbf{12}(3) 1112--1149, 2021.

\bibitem{Noba} Noba, K.  Generalized scale functions of standard processes with no positive jumps. {\it Electron. Commun. Probab.}, \textbf{25}, 1-12, 2020.
	
			\bibitem{PPY} Palmowski, Z., P\'erez, J. L., Yamazaki, K.  Double continuation regions for American options under Poisson exercise opportunities. {\it Math. Financ.}, {\bf 31}(2), 722-771. 2021.



\bibitem{PPSY} Palmowski, Z., P\'erez, J. L., Surya, B. A., Yamazaki, K. The Leland-Toft optimal capital structure
model under Poisson observations, \textit{Finance Stoch.}, \textbf{24}, 1035--1082, 2020.

\bibitem{PY} P\'erez, J.L.\ and Yamazaki, K. \rm Mixed periodic-classical barrier strategies for L\'evy risk processes.  {\it Risks}, \textrm{6}(2), 33, 2018.


	\bibitem{Perez_Yamazaki_Bensoussan}  P\'erez, J.L., Yamazaki, K., Bensoussan, A.  Optimal periodic replenishment policies for spectrally positive L\'evy demand processes. {\it SIAM J. Control Optim.}, {\bf 58}(6), 3428--3456, 2020.

\bibitem{Puterman}
Puterman, M. L.  Markov decision processes: discrete stochastic dynamic programming. John Wiley \& Sons, 2014.


\bibitem{Snell} Snell, L. \textit{Applications of martingale systems theorems.}  \textit{Trans. Amer. Math. Soc.}  \textbf{73}, 293-312, 1952.



		\bibitem{ZCY}
Zhao, Y., Chen, P., Yang, H. Optimal periodic dividend and capital injection problem for spectrally positive Lévy processes. {\it Insurance Math. Econom.}, {\bf 74}, 135-146, 2017.

\end{thebibliography}
\subsection{Gittins index for spectrally one-sided \lev processes} \label{section_gittins_SN}
We now express the Gittins index \eqref{gittins} in terms of the scale function for both spectrally negative and positive cases.





\begin{proposition} \label{prop_SN}
Suppose $X$ is a  spectrally negative \lev process with its Laplace exponent $\psi$ and scale function $W^{(q)}$ as defined in Section \ref{section_fluctuation_identities}.

\begin{enumerate}
\item The Gittins index \eqref{gittins} for Problem 1 can be written, for $x \in \R$,
\begin{align} \label{Gittins_SN_1_a}
\begin{split}
\Gamma(x) 
&= \frac {\Phi(q)} {\Phi(q+\lambda)} 
 \left( R(x)+ (\Phi(q+\lambda) - \Phi(q)) \int_0^\infty  R(x+y)e^{-\Phi(q) y} \diff y \right).
\end{split}
\end{align}
\item  The Gittins index \eqref{gittins} for Problem 2 can be written, for $x \in \R$,
\begin{multline}\label{gitt_prob_1_sn}
\Gamma(x)
=\frac{ \Phi(q)}{\Phi(q+\lambda)}\Bigg[\int_{\R}r(x+u)g^{(q+\lambda)}(u)\diff u \\+(\Phi(q+\lambda)-\Phi(q))\int_0^{\infty}  e^{-\Phi(q)y}\int_{\R} r(x+y+u)g^{(q+\lambda)}(u)\diff u\diff y\Bigg], 
\end{multline}
where $g^{(q+\lambda)}$ is defined as in \eqref{resolvent_class} (with $q$ replaced by $q+\lambda$).
\end{enumerate}
\end{proposition}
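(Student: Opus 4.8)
The plan is to reduce the two expectations appearing in the Gittins index \eqref{gittins} to the resolvent-type quantity computed in Lemma \ref{thm_resolvent}(1), and then to evaluate the resulting scale-function expression in closed form. First I would use the spatial homogeneity of the \lev process: starting $X$ at $x$ is the same as shifting to $Y(t) := X(t) - x$ starting at $0$, under which $\{X(s) > x\} = \{Y(s) > 0\}$ and $T_x^-$ becomes $\tilde{T}_0^- = \inf\{S \in \mathcal{T} : Y(S) < 0\}$ (the non-strict and strict first-passage levels agreeing almost surely, since the independent Poisson observation times charge the level $x$ with probability zero for a genuine spectrally negative process). Both the numerator and denominator integrals then take the form $\E[\int_0^{\tilde{T}_0^-} e^{-qs} f(Y(s)) 1_{\{Y(s)>0\}} \diff s]$, with $f \equiv 1$ for the denominator and $f = R(x + \cdot)$ for the numerator.

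Next I would apply Lemma \ref{thm_resolvent}(1), first extending it from indicators $1_A$ to the integrand $f \cdot 1_{(0,\infty)}$ by the usual simple-function/monotone-class argument (linearity, monotone convergence, then splitting into positive and negative parts), the integrability needed to interchange expectation and integration being supplied by Assumption \ref{assump_R_increasing}(2). This yields, up to the prefactor $(\Phi(q+\lambda)-\Phi(q))/\lambda$, the integral $\int_0^\infty f(u) H^{(q+\lambda)}(-u;\Phi(q)) \diff u$. The key simplification is that for $u > 0$ one has $H^{(q+\lambda)}(-u;\Phi(q)) = e^{-\Phi(q)u}$: indeed $W^{(q+\lambda)}$ vanishes on $(-\infty,0)$ so the last term of \eqref{der_gamma_0_0} drops out, and since $\psi(\Phi(q)) = q$ the remaining second scale function reduces to $Z^{(q+\lambda)}(-u;\Phi(q)) = e^{-\Phi(q)u}$. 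Taking $f \equiv 1$ then gives the denominator $1 + (\Phi(q+\lambda)-\Phi(q))/\Phi(q) = \Phi(q+\lambda)/\Phi(q)$, while taking $f = R(x+\cdot)$ gives the numerator $R(x) + (\Phi(q+\lambda)-\Phi(q))\int_0^\infty R(x+y)e^{-\Phi(q)y}\diff y$; forming the ratio produces \eqref{Gittins_SN_1_a}.

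For part (2) I would simply feed the Problem 2 reward \eqref{connection_prob_0_1}, which is the $(q+\lambda)$-resolvent $\E_x[\int_0^\infty e^{-(q+\lambda)t} r(X(t)) \diff t]$, into the formula \eqref{Gittins_SN_1_a} just established. Evaluating this resolvent through \eqref{resol_sn} with $q$ replaced by $q+\lambda$ rewrites $R(x) = \int_\R r(x+u) g^{(q+\lambda)}(u)\diff u$ and likewise $R(x+y)$, and a Fubini interchange delivers \eqref{gitt_prob_1_sn}.

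I expect the main obstacle to be the rigorous extension of Lemma \ref{thm_resolvent}(1) to the unbounded, possibly sign-changing reward $R$, together with the attendant integrability bookkeeping (and the minor almost-sure reconciliation of the strict versus non-strict first-passage levels); by contrast, the scale-function computation of $H^{(q+\lambda)}(-u;\Phi(q))$ itself is short once one uses the vanishing of $W^{(q+\lambda)}$ on the negative half-line and the defining identity $\psi(\Phi(q))=q$ from \eqref{def_varphi}.
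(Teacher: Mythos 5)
Your proposal is correct and follows essentially the same route as the paper: both reduce the numerator and denominator of \eqref{gittins} to Lemma \ref{thm_resolvent}(1) via spatial homogeneity, use $H^{(q+\lambda)}(-y;\Phi(q))=e^{-\Phi(q)y}$ for $y\ge 0$, and then obtain part (2) by substituting the resolvent form of \eqref{connection_prob_0_1} into \eqref{Gittins_SN_1_a}. You simply make explicit a few points the paper leaves implicit (the extension from indicators to general integrands, the strict/non-strict passage level, and why $Z^{(q+\lambda)}(-u;\Phi(q))=e^{-\Phi(q)u}$ --- which, to be precise, follows from $W^{(q+\lambda)}$ vanishing on $(-\infty,0)$ rather than from $\psi(\Phi(q))=q$).
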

\begin{proof}

(1) By Lemma \ref{thm_resolvent}(1) with $Y=X$ and $h(\cdot) = r(x+\cdot )$ and because $H^{(q+\lambda)}(-y;\Phi(q)) =
e^{-\Phi(q) y}$ for all $y \geq 0$,
\begin{align*}
\E_x\left[\int_0^{T_x^-}e^{-qs}R(X(s))1_{\{X(s)>x\}}\diff s \right]&=\E\left[\int_0^{T_0^-}e^{-qs}R(X(s) +x)1_{\{X(s)>0\}}\diff s \right] \\ &=\frac{\Phi(q+\lambda)-\Phi(q)}{\lambda}\int_0^\infty R(x+y)
e^{-\Phi(q) y}
\diff y.
\end{align*}
By replacing $R(\cdot)$ with $1$, we also have $\E_x\left[\int_0^{T_x^-}e^{-qs}1_{\{X(s)>x\}}\diff s \right] = (\Phi(q+\lambda)-\Phi(q))/(\lambda \Phi(q))$. Substituting these in \eqref{gittins}, we have  \eqref{Gittins_SN_1_a}.

(2) 
By \eqref{connection_prob_0_1} and \eqref{resol_sn}, we have
$R(x)
=\int_{\R}r(x+u)g^{(q+\lambda)}(u)\diff u$, which is finite by Assumption \ref{assump_prob1}.
Substiuting this in \eqref{Gittins_SN_1_a}, we have \eqref{gitt_prob_1_sn}.

\end{proof}

\begin{proposition} \label{prop_SP}
Suppose $X$ is a  spectrally positive \lev process whose dual process $-X$ is a spectrally negative \lev process with its Laplace exponent $\psi$ and scale function $W^{(q)}$. 
\begin{enumerate}
\item The Gittins index for Problem 1 is given by, for $x \in \R$,
\begin{align}\label{Gittins_SP_1}
\Gamma(x)= \frac{q \Phi(q+\lambda)}{(\lambda+q)\Phi(q)} \left(R(x)+ (\Phi(q+\lambda)-\Phi(q)) \int_0^\infty R(x+y)H^{(q)}(y;\Phi(q+\lambda))\diff y \right). 
\end{align}
\item The Gittins index for Problem 2 is given by, for $x \in \R$,
\begin{multline}\label{gitt_prob_1_sp}
\Gamma(x)
=\frac{q \Phi(q+\lambda)}{(\lambda+q)\Phi(q)}\left(\int_{\R}r(x-u)g^{(q+\lambda)}(u)\diff u \right. \\ \left. +(\Phi(q+\lambda)-\Phi(q)) \int_0^\infty  H^{(q)}(y;\Phi(q+\lambda))  \int_{\R} r(x+y-u)g^{(q+\lambda)}(u)\diff u \diff y \right). 
\end{multline}

\end{enumerate}
\end{proposition}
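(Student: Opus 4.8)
The plan is to mirror the proof of Proposition \ref{prop_SN}, replacing the spectrally negative downward identity Lemma \ref{thm_resolvent}(1) by its upward dual Lemma \ref{thm_resolvent}(2). Since $X$ is spectrally positive, $Y:=-X$ is spectrally negative with Laplace exponent $\psi$ and scale function $W^{(q)}$. Working under $\p_x$ and centering by $\tilde{Y}(s):=x-X(s)$, so that $\tilde{Y}$ is a spectrally negative process started at $0$, the event $\{X(s)>x\}$ becomes $\{\tilde{Y}(s)<0\}$, one has $-\tilde{Y}(s)=X(s)-x$, and the stopping level translates as $T_x^-=\tilde{T}_0^+$ (the discrepancy between $\{X(S)\le x\}$ and $\{\tilde{Y}(S)>0\}$ sits on the null event $\{\tilde{Y}(S)=0\}$, since $\tilde{Y}$ evaluated at an independent Poisson time has no atom at $0$).

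First I would evaluate the two resolvent-type integrals in \eqref{gittins}. Applying Lemma \ref{thm_resolvent}(2) to $\tilde{Y}$, extended from indicators $1_A$ to a general integrable function by linearity and the usual monotone-class approximation, gives for $h=R$ and $h=1$ respectively
\begin{align*}
\E_x\left[\int_0^{T_x^-}e^{-qs}h(X(s))1_{\{X(s)>x\}}\diff s\right]=\frac{\Phi(q+\lambda)-\Phi(q)}{\lambda}\int_0^\infty h(x+y)H^{(q)}(y;\Phi(q+\lambda))\diff y.
\end{align*}
With $h=R$ this produces the numerator of \eqref{Gittins_SP_1} directly. With $h=1$ the denominator of \eqref{gittins} reduces to $1+(\Phi(q+\lambda)-\Phi(q))\,I$ where $I:=\int_0^\infty H^{(q)}(y;\Phi(q+\lambda))\diff y$, so everything hinges on computing the constant $I$.

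The evaluation of $I$ will be the main obstacle. Writing $H^{(q)}(\cdot;\Phi(q+\lambda))=Z^{(q)}(\cdot;\Phi(q+\lambda))-\tfrac{\lambda}{\Phi(q+\lambda)-\Phi(q)}W^{(q)}$, each summand grows like $e^{\Phi(q)y}$ and their integrals diverge separately; only after cancellation of the leading exponentials does $H^{(q)}$ decay and $I$ converge. I would handle this through the Laplace transform: using $Z^{(q)}(y;\Phi(q+\lambda))=e^{\Phi(q+\lambda)y}\bigl(1-\lambda\int_0^y e^{-\Phi(q+\lambda)z}W^{(q)}(z)\diff z\bigr)$, Fubini, the scale-function transform \eqref{new_4_a}, and $\psi(\Phi(q+\lambda))=q+\lambda$, one obtains $\int_0^\infty e^{-\theta y}H^{(q)}(y;\Phi(q+\lambda))\diff y$ as a rational expression in $\psi(\theta)$ whose apparent poles at $\theta=\Phi(q)$ and $\theta=\Phi(q+\lambda)$ are both removable, confirming the exponential tail and justifying continuation to $\theta=0$. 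Setting $\theta=0$ and using $\psi(0)=0$ yields $I=\frac{(q+\lambda)\Phi(q)-q\Phi(q+\lambda)}{q\Phi(q+\lambda)(\Phi(q+\lambda)-\Phi(q))}$, so the denominator collapses to $\frac{(\lambda+q)\Phi(q)}{q\Phi(q+\lambda)}$; dividing the numerator by it gives precisely the prefactor $\frac{q\Phi(q+\lambda)}{(\lambda+q)\Phi(q)}$ and hence \eqref{Gittins_SP_1}.

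Finally, part (2) follows from part (1) with no further fluctuation theory. By \eqref{connection_prob_0_1}, $R$ is the $(q+\lambda)$-resolvent of $r$, and applying \eqref{resol_sn} to $Y=-X$ (with $q$ replaced by $q+\lambda$) together with $r(X(t))=r(-Y(t))$ gives $R(x)=\int_{\R}r(x-u)g^{(q+\lambda)}(u)\diff u$, and likewise for $R(x+y)$. Substituting these into \eqref{Gittins_SP_1} produces \eqref{gitt_prob_1_sp}. All interchanges of integration and the Abelian passage $\theta\downarrow 0$ are licensed by the integrability in Assumption \ref{assump_prob1} and the exponential tail cancellation established for $H^{(q)}$.
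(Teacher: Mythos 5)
Your proposal is correct and reaches the same identities as the paper; the treatment of the numerator of \eqref{gittins} (Lemma \ref{thm_resolvent}(2) applied to the dual $Y=-X$) and of part (2) (substituting $R(x)=\int_{\R}r(x-u)g^{(q+\lambda)}(u)\diff u$ into \eqref{Gittins_SP_1}) coincides with the paper's. Where you genuinely diverge is in the denominator. The paper never integrates $H^{(q)}(\cdot;\Phi(q+\lambda))$ over $(0,\infty)$ at this stage: it writes $\E_x[\int_0^{T_x^-}e^{-qs}1_{\{X(s)>x\}}\diff s]$ as the difference \eqref{new_6_a}, evaluates the first term from $\E_x[e^{-qT_x^-}]=(\Phi(q+\lambda)-\Phi(q))/\Phi(q+\lambda)$ (identity (62) of \cite{MPY}), and evaluates the second by Lemma \ref{thm_resolvent}(2) with $A=(-\infty,0]$, where $H^{(q)}(y;\Phi(q+\lambda))=e^{\Phi(q+\lambda)y}$ for $y\le 0$ reduces everything to an elementary exponential integral. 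You instead compute $I=\int_0^\infty H^{(q)}(y;\Phi(q+\lambda))\diff y$ directly by Laplace transform, checking that the apparent poles at $\theta=\Phi(q)$ and $\theta=\Phi(q+\lambda)$ are removable and passing to $\theta=0$; your value of $I$ is confirmed by setting $\theta=0$ in the paper's own formula \eqref{new_5_a}, and it yields the same denominator $(\lambda+q)\Phi(q)/(q\Phi(q+\lambda))$. The trade-off: the paper's route invokes one extra external identity but uses only elementary integrals, while yours is self-contained modulo the transform computation \eqref{new_2_a}--\eqref{new_5_a}, which the paper carries out anyway in the proof of Proposition \ref{proposition_SP}, so nothing new is actually required. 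Your remark that the strict/non-strict discrepancy between $T_x^-$ and $\tilde{T}_0^+$ sits on the null event $\{X(S)=x\}$ addresses a point the paper passes over silently.
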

\begin{proof}
(1) By Lemma \ref{thm_resolvent}(2) with $Y = -X$, 
\begin{align*}
\E_x\left[\int_0^{T_x^-}e^{-qs}R(X(s))1_{\{X(s)>x\}}\diff s \right]
 &=\E\left[\int_0^{\tilde{T}_0^+}e^{-qs}R(-Y(s)+x)1_{\{-Y(s)>0\}}\diff s \right] \\
&= \frac{\Phi(q+\lambda)-\Phi(q)}{\lambda} \int_0^\infty R(x+y)H^{(q)}(y;\Phi(q+\lambda))\diff y,
\end{align*}
which is finite by Assumption \ref{assump_R_increasing}(2).
On the other hand, 
\begin{align}\label{new_6_a}
	\E_x\left[\int_0^{T_x^-}e^{-qs}1_{\{X(s)>x\}}\diff s \right]&=\E_x\left[\int_0^{T_x^-}e^{-qs}\diff s \right]-\E_x\left[\int_0^{T_x^-}e^{-qs}1_{\{X(s)\leq x\}}\diff s \right].
\end{align}
Here, by identity (62) in \cite{MPY}, 
\begin{align*} 
	\E_x\left[\int_0^{T_x^-}e^{-qs}\diff s \right] = \frac{1}{q}\left(1-\E_x\left[e^{-qT_x^-}\right]\right)=\frac{1}{q}\left(1-\frac{\Phi(q+\lambda)-\Phi(q)}{\Phi(q+\lambda)}\right)=\frac{1}{q}\frac{\Phi(q)}{\Phi(q+\lambda)},
\end{align*}
and, by Lemma \ref{thm_resolvent}(2) with $Y = -X$ and $A = (-\infty,0]$, 
\begin{multline*} 
\E_x\left[\int_0^{T_x^-}e^{-qs}1_{\{X(s)\leq x\}}\diff s \right] = \E\left[\int_0^{\tilde{T}_0^+}e^{-qs}1_{\{-Y(s)\leq  0\}}\diff s \right] = \int_{-\infty}^0 \frac{\Phi(q+\lambda)-\Phi(q)}{\lambda}H^{(q)}(y,\Phi(q+\lambda))\diff y  \\= \int_0^{\infty} \frac{\Phi(q+\lambda)-\Phi(q)}{\lambda}e^{-\Phi(q+\lambda)y}\diff y=\frac{\Phi(q+\lambda)-\Phi(q)}{\Phi(q+\lambda)\lambda}.
\end{multline*}
Substituting these in 
\eqref{new_6_a} gives
\begin{align*}
1 + \lambda \E_x\left[\int_0^{T_x^-}e^{-qs}1_{\{X(s)>x\}}\diff s \right]=1 + \lambda 
\left( \frac{1}{q}\frac{\Phi(q)}{\Phi(q+\lambda)} - \frac{\Phi(q+\lambda)-\Phi(q)}{\Phi(q+\lambda)\lambda} \right)
= \frac{(\lambda+q)\Phi(q)}{q \Phi(q+\lambda)}.
\end{align*}
Substituting these we have \eqref{Gittins_SP_1}.

(2) 
By \eqref{connection_prob_0_1} and \eqref{resol_sn},
\begin{align*}
R(x)=\E\left[\int_0^{\infty}e^{-(q+\lambda)s}r(X(s)+x)\diff s\right]
=\E\left[\int_0^{\infty}e^{-(q+\lambda)s}r(-Y(s)+x)\diff s\right] 
=\int_{\R}r(x-u)g^{(q+\lambda)}(u)\diff u,
\end{align*}
which is finite by Assumption \ref{assump_prob1}(2).
Substiuting this in \eqref{Gittins_SP_1}, we have \eqref{gitt_prob_1_sp}.
%
%
%
%
\end{proof}

%
%
%

\section{Convergence of the Gittins index as $\lambda \to \infty$} \label{section_convergence}

In this section, we show the convergence of the Gittins indices obtained in Propositions \ref{prop_SN} and \ref{prop_SP} to those in the classical \lev case  \cite{Mandelbaum88}.

\subsection{Classical \lev bandit case} \label{section_classical_case_SN}

We first review and obtain a characterization of the Gittins index of \cite{Mandelbaum88}.
Let $Y$ be a spectrally negative \lev process and use the same notations used in Section \ref{section_fluctuation_identities}.

For the spectrally negative case, the local time at maximum can be selected to be $L(t) =\overline{Y}(t) := \sup_{0 \leq u \leq t} {Y}(u)$ for $t \geq 0$. Then, the ascending ladder height process becomes $(L^{-1}(t), H(t))_{t \geq 0}$ where
%
\[
L^{-1}(t) := \inf \{s > 0: \overline{Y}(s) > t \} =  \inf \{s > 0: Y(s) > t \},
\]
and because $Y$ does not jump upwards,
\[
H(t) := Y(L^{-1}(t)) = t,
\]
(see \cite[Section 6.5.2]{K}). As in \cite[(6.34)]{K}, the pair $(L^{-1}(t), H(t))_{t \geq 0}$ becomes a two-dimensional subordinator with its Laplace exponent 
\begin{align} \label{laplace_exp_ladder_SN}
\E [e^{- q L^{-1}(1) - \theta H(1)} 1_{\{ 1 < L(\infty)\}}] = e^{- \kappa(q, \theta)}
\end{align}
where
\[
\kappa(q, \theta) := \Phi(q)+\theta, \quad q, \theta \geq 0.
\]
By the Wiener-Hopf factorization, with $\widehat{L}$ and $(\widehat{L}^{-1}, \widehat{H})$ those for the dual process $-Y$ (called the descending ladder height process)
\begin{align}
\E [e^{- q \widehat{L}^{-1}(1) - \theta \widehat{H}(1)} 1_{\{ 1 < \widehat{L}(\infty)\}}] = e^{- \widehat{\kappa}(q, \theta)}  \label{laplace_exp_ladder_SP}
\end{align}
with
\[\widehat{\kappa}(q, \theta) := \frac {q - \psi(\theta)} {\Phi(q) - \theta}, \quad q, \theta \geq 0;
\]
see \cite[Eqn.\ (6.35)]{K}. 

%


Using these, the Laplace transform \eqref{mu_laplace2} that characterizes the Gittins index in the classical \lev model can be written explicitly.

(1) Suppose $X$ is a spectrally negative \lev process. By \eqref{laplace_exp_ladder_SN} for $Y=X$,
\[
\bar{\varphi} (q, \theta) = \kappa(q, \theta) = \Phi(q)+\theta, \quad q, \theta \geq 0,
\]
and thus \eqref{mu_laplace2} becomes
\begin{align}  \label{mu_laplace_classical_SN}
\int_{[0,\infty)} e^{-\theta y} \mu (\diff y) = \frac {\Phi(q)} {\Phi(q)+\theta}, \quad \theta \geq 0.
\end{align}

(2) Suppose $X$ is a spectrally positive \lev process. Let $\psi$ be the Laplace exponent of its dual (spectrally negative) \lev process and $\Phi$ its right inverse. By \eqref{laplace_exp_ladder_SP} for $Y=-X$,
\[
\bar{\varphi} (q, \theta) =\widehat{\kappa}(q, \theta) = \frac {q - \psi(\theta)} {\Phi(q) - \theta}, \quad  \theta \geq 0,
\]
and \eqref{mu_laplace2} becomes
\begin{align}  \label{mu_laplace_classical_SP}
\int_{[0,\infty)} e^{-\theta y} \mu (\diff y) = \frac{q}{\Phi(q)}\frac{\theta-\Phi(q)}{\psi(\theta)-q}, \quad  \theta \geq 0.
\end{align}

\subsection{Convergence of the Gittins index}
Suppose $X$ is  spectrally negative. For Problem 1, the Gittins index \eqref{Gittins_SN_1_a} can be written
\begin{align} \label{mu_1_lambda}
 \Gamma(x) 
	&=\int_{[0,\infty)}R(x+y)\mu_1^{\lambda}(\diff y), \quad x \in \R,
\end{align}
with 
\begin{align*}
	\mu_1^{\lambda}(\diff y)&:=\frac{\Phi(q)}{\displaystyle \Phi(q+\lambda) } \left( \delta_0(\diff y)+ 1_{\{y>0\}} (\Phi(q+\lambda)-\Phi(q))e^{-\Phi(q)y}\diff y\right), \quad y \geq 0,
\end{align*}
where $\delta_0$ is the Dirac measure at zero.

For Problem 2, the (normalized) Gittins index \eqref{gitt_prob_1_sn} can be written
\begin{align}
\begin{split}
(q+\lambda)\Gamma(x) &=\frac{(q+\lambda)\Phi(q)}{\Phi(q+\lambda)}\left[\int_{\R}r(x+u)g^{(q+\lambda)}(u)\diff u \right. \\ &\qquad \qquad \left. +(\Phi(q+\lambda)-\Phi(q))\int_0^{\infty}\int_{\R} r(x+w)e^{-\Phi(q)y}g^{(q+\lambda)}(w-y)\diff w\diff y \right] \\
&=\int_{\R}r(x+u)\mu_2^{\lambda}(\diff u), \quad x \in \R, \label{mu_2_lambda_int}
\end{split}
\end{align}
where 
\begin{align*}
\mu_2^\lambda(\diff u):=\frac{(q+\lambda)\Phi(q)}{\displaystyle \Phi(q+\lambda) }\left[g^{(q+\lambda)}(u)+ (\Phi(q+\lambda)-\Phi(q)) (f^{(q)}\star g^{(q+\lambda)})(u)\right] \diff u, \quad u \in \R.
\end{align*}
Here,
\[
(f^{(q)}\star g^{(q+\lambda)})(u) := \int_0^{\infty}e^{-\Phi(q)y}g^{(q+\lambda)}(u-y)\diff y
\]
which is a convolution of 
$f^{(q)}(u):= e^{-\Phi(q)u}1_{\{u>0\}}$ and $g^{(q+\lambda)}$ as defined in \eqref{resolvent_class}. 

In \eqref{mu_2_lambda_int}, we consider a normalized version because $R(\cdot)$ in \eqref{connection_prob_0_1} depends on $\lambda$.  This normalization is appropriate in view of Remark \ref{remark_prob_measure}(2) below. 

\begin{remark} \label{remark_prob_measure}
\begin{enumerate}
\item By setting $R \equiv 1$ in \eqref{mu_1_lambda}, we have $1 = \Gamma(x) = \mu_1^{\lambda}([0,\infty))$ and hence $\mu_1^{\lambda}$ is a probability measure.
\item When $r \equiv 1$, $R = (q+\lambda)^{-1}$ (see \eqref{connection_prob_0_1}) and  thus $\Gamma(x) \equiv (q+\lambda)^{-1}$. Substituting these in  \eqref{mu_2_lambda_int}, we have $\mu_2^\lambda(\R) = 1$, and thus $\mu_2^{\lambda}$ is a probability measure.
\end{enumerate}
\end{remark}

\begin{proposition}\label{convergence} When $X$ is spectrally negative, the measures $\mu_1^\lambda$ and $\mu_2^\lambda$ converge weakly to $\mu$ (defined by \eqref{mu_laplace_classical_SN}) as $\lambda \to \infty$.
\end{proposition}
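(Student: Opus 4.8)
The plan is to prove weak convergence by showing convergence of Laplace transforms, which characterizes weak convergence for measures supported on (essentially) the half-line and is especially clean here because the target measure $\mu$ is specified precisely through its Laplace transform \eqref{mu_laplace_classical_SN}. First I would record the key asymptotic fact driving everything: as $\lambda \to \infty$, $\Phi(q+\lambda) \to \infty$ while $\Phi(q)$ stays fixed, so the ratio $\Phi(q)/\Phi(q+\lambda) \to 0$ and the ``excess'' rate $\Phi(q+\lambda)-\Phi(q) \to \infty$. The measure $\mu_1^\lambda$ puts mass $\Phi(q)/\Phi(q+\lambda)$ at the atom $\delta_0$ and the complementary mass on an exponential density of rate $\Phi(q)$ scaled by $(\Phi(q+\lambda)-\Phi(q))$; as $\lambda\to\infty$ the atom vanishes and the absolutely continuous part approaches the pure exponential$(\Phi(q))$ law, which is exactly $\mu$.

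For $\mu_1^\lambda$ the computation is essentially immediate. I would compute $\int_{[0,\infty)} e^{-\theta y}\mu_1^\lambda(\diff y)$ directly: the atom contributes $\Phi(q)/\Phi(q+\lambda)$, and the exponential part integrates to $\frac{\Phi(q)}{\Phi(q+\lambda)}\cdot\frac{\Phi(q+\lambda)-\Phi(q)}{\Phi(q)+\theta}$. Summing and taking $\lambda\to\infty$, the first term vanishes and the second tends to $\frac{\Phi(q)}{\Phi(q)+\theta}$, which matches \eqref{mu_laplace_classical_SN}. Since $\mu_1^\lambda$ and $\mu$ are all probability measures (Remark \ref{remark_prob_measure}(1)) on $[0,\infty)$, pointwise convergence of Laplace transforms for every $\theta\geq 0$ yields weak convergence by the continuity theorem for Laplace transforms.

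The case of $\mu_2^\lambda$ is the main obstacle, since its density is built from the resolvent kernel $g^{(q+\lambda)}$ and a convolution with $f^{(q)}$, so its Laplace transform is not as transparent. The clean route is to exploit the structure $\mu_2^\lambda = $ (normalizing constant) $\times [\, g^{(q+\lambda)} + (\Phi(q+\lambda)-\Phi(q))\,(f^{(q)}\star g^{(q+\lambda)})\,]$ and compute the bilateral Laplace transform factorwise. The transform of $f^{(q)}$ is $1/(\Phi(q)+\theta)$ for $\theta > -\Phi(q)$, and from \eqref{resolvent_class} together with \eqref{new_4_a} the transform of $g^{(q+\lambda)}$ can be written in terms of $\psi$ and $\Phi(q+\lambda)$; I would verify that the two-sided transform of $g^{(q+\lambda)}$ equals $\big(\theta - \Phi(q+\lambda)\big)/\big(\psi(\theta)-(q+\lambda)\big)$ minus the corresponding pole contributions, matching the form appearing in \eqref{mu_laplace_classical_SP}-type identities. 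Since convolution turns into a product of transforms, the transform of $\mu_2^\lambda$ factors, and the prefactor $(\Phi(q+\lambda)-\Phi(q))/(\Phi(q)+\theta)$ from the convolution term combines with the $g^{(q+\lambda)}$ transform.

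Finally I would take $\lambda\to\infty$ in this product. The delicate point is that $g^{(q+\lambda)}$ itself degenerates (the resolvent of an increasingly heavily discounted process concentrates its mass near $0$), so I expect the bracketed transform to tend to a constant while the factor $(q+\lambda)\Phi(q)/\Phi(q+\lambda)$ multiplied against it produces a finite limit; the convolution term, carrying the factor $(\Phi(q+\lambda)-\Phi(q))$ and the $1/(\Phi(q)+\theta)$ from $f^{(q)}$, is what survives and reproduces $\Phi(q)/(\Phi(q)+\theta)$. The careful asymptotic bookkeeping of these competing factors $\Phi(q+\lambda)\to\infty$ and $q+\lambda\to\infty$ against the vanishing $g^{(q+\lambda)}$-transform is where I expect to spend the real effort; I would likely use $\psi(\theta)/\lambda \to 0$ and $\Phi(q+\lambda)\sim \psi^{-1}(\lambda)$ growth estimates to control the limit, and then invoke the Laplace continuity theorem once more (valid since both $\mu_2^\lambda$ and $\mu$ are probability measures by Remark \ref{remark_prob_measure}(2)) to conclude weak convergence.
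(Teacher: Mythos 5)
Your treatment of $\mu_1^\lambda$ is correct and is exactly the paper's argument: compute the Laplace transform as
\[
\int_{[0,\infty)}e^{-\theta y}\mu_1^{\lambda}(\diff y)=\frac{\Phi(q)}{\Phi(q+\lambda)}\,\frac{\theta+\Phi(q+\lambda)}{\Phi(q)+\theta},
\]
let $\lambda\to\infty$, and invoke the continuity theorem; nothing more is needed there.

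For $\mu_2^\lambda$, however, your plan has a genuine gap. You propose to work with the bilateral Laplace transform of $g^{(q+\lambda)}$ and of the convolution $f^{(q)}\star g^{(q+\lambda)}$, and then to ``invoke the Laplace continuity theorem once more.'' But $\mu_2^\lambda$ is supported on all of $\R$ (since $g^{(q+\lambda)}(u)$ is nonzero for $u<0$), and for a general spectrally negative \lev process the quantity
\[
\int_{\R}e^{-\theta u}g^{(q+\lambda)}(u)\,\diff u=\E\left[\int_0^\infty e^{-(q+\lambda)s}e^{-\theta X(s)}\,\diff s\right]
\]
need not be finite for any $\theta>0$: the Laplace exponent $\psi$ is only defined on $[0,\infty)$, and $\E[e^{-\theta X(1)}]=\infty$ whenever the negative jumps lack exponential moments. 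So the factorwise two-sided transform you want to compute may simply not exist, and even when it does, the Laplace continuity theorem you cite is a statement about measures on the half-line, not on $\R$. This is precisely why the paper switches to Fourier transforms for Problem 2: with $\Psi(\theta)=-\psi(\mathrm{i}\theta)$ one gets the exact identities $\int_{\R}e^{\mathrm{i}\theta y}g^{(q+\lambda)}(y)\,\diff y=(\Psi(\theta)+q+\lambda)^{-1}$ and $\int_{\R}e^{\mathrm{i}\theta y}f^{(q)}(y)\,\diff y=(\Phi(q)-\mathrm{i}\theta)^{-1}$, the convolution theorem then yields
\[
\int_{\R}e^{\mathrm{i}\theta y}\mu_2^{\lambda}(\diff y)=\frac{\Phi(q+\lambda)-\mathrm{i}\theta}{\Phi(q+\lambda)}\,\frac{q+\lambda}{\Psi(\theta)+q+\lambda}\,\frac{\Phi(q)}{\Phi(q)-\mathrm{i}\theta},
\]
and the limit is immediate (the first two factors tend to $1$), with L\'evy's continuity theorem closing the argument. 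Note also that the ``careful asymptotic bookkeeping'' and growth estimates $\Phi(q+\lambda)\sim\psi^{-1}(\lambda)$ you anticipate are unnecessary once the transform is written in this exact product form; your sketch stops short of actually producing the limit, whereas the computation, done with the right transform, is elementary.
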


\begin{proof}

\underline{Proof for Problem 1}.
For $\theta \geq 0$, because  
\[
\int_{[0,\infty)} e^{-\theta y } [\delta_0(\diff y)+ (\Phi(q+\lambda)-\Phi(q))1_{\{y>0\}}e^{-\Phi(q)y}\diff y] = 1 + \frac{\Phi(q+\lambda)-\Phi(q)}{\Phi(q)+\theta} = \frac {\theta + \Phi(q+\lambda)} { \Phi(q)+\theta}, 
\]
the Laplace transform of $\mu_1^{\lambda}$ becomes
\begin{align}
	\int_{[0, \infty)}e^{-\theta y}\mu_1^{\lambda}(\diff y)=
	\frac { \Phi(q)} { \Phi(q+\lambda)}  \frac {\theta + \Phi(q+\lambda)} {\Phi(q)+\theta }, 
 \label{laplace_transform_mu}
\end{align}
which converges to \eqref{mu_laplace_classical_SN} as $\lambda \to \infty$.
By the the continuity theorem for Laplace transforms, the  weak convergence holds.

\underline{Proof for Problem 2}.
Because $\mu_2^\lambda$ has a support $\R$, we consider Fourier transforms. To this end, define  the characteristic exponent of $X =Y$ by $\Psi(\theta) = - \psi(\mathrm{i} \theta)$ such that 
\begin{equation}
\E [e^{\mathrm{i} \theta X(s)}] = e^{-\Psi(\theta)s}, \quad \theta \in \R, s \geq 0. \label{characteristic_X}
\end{equation}

Fix $\theta \in \R$.
Using \eqref{resol_sn} and \eqref{characteristic_X}, the Fourier transform of $g^{(q+\lambda)}$ becomes
\begin{align} \label{fourier_g}
\int_{\R}e^{\mathrm{i}\theta y}g^{(q+\lambda)}(y)\diff y=\E\left[\int_{0}^{\infty}e^{-(q+\lambda)s}e^{\mathrm{i}\theta X(s)}\diff s\right]&=\int_{0}^{\infty}e^{-(q+\lambda)s}e^{-\Psi(\theta)s}\diff s=\frac{1}{\Psi(\theta)+(q+\lambda)}.
\end{align}
On the other hand, the Fourier transform of $f^{(q)}$ becomes 
$\int_{\R}e^{\mathrm{i}\theta y}f^{(q)}(y) \diff y =
\int_{0}^{\infty}e^{\mathrm{i}\theta y}e^{-\Phi(q)y}\diff y=(\Phi(q)-\mathrm{i}\theta)^{-1}$.
By these and the convolution theorem,
\[
\int_{\R}e^{\mathrm{i}\theta y}(f^{(q)}\star g^{(q+\lambda)})(y) \diff y = \frac{1}{(\Psi(\theta)+(q+\lambda)) (\Phi(q)-\mathrm{i}\theta)}.
\]
This together with \eqref{fourier_g} gives

\begin{align}\label{laplace_transform_mu_2} 
\begin{split}
\int_{\R}e^{\mathrm{i}\theta y}\mu_2^{\lambda}(\diff y)&=\frac{ \Phi(q)}{\Phi(q+\lambda)}\frac{(q+\lambda)}{\Psi(\theta)+(q+\lambda)}\left(1+\frac{\Phi(q+\lambda)-\Phi(q)}{\Phi(q)-\mathrm{i}\theta}\right)\\
&=\frac{\Phi(q+\lambda)-\mathrm{i}\theta}{\Phi(q+\lambda)}\frac{(q+\lambda)}{\Psi(\theta)+(q+\lambda)}\frac{\Phi(q)}{\Phi(q)-\mathrm{i}\theta},
\end{split}
\end{align}
which converges  as $\lambda \to \infty$ to
$\int_{[0,\infty)}e^{\mathrm{i}\theta y}\mu(\diff y) = \Phi(q) / (\Phi(q)-\mathrm{i}\theta)$,
matching \eqref{mu_laplace_classical_SN} (with $\theta$ replaced by $- \mathrm{i} \theta$).
By the continuity theorem, the weak convergence holds.
\end{proof}
\begin{remark}
(1) For Problem 1, the Laplace transform \eqref{laplace_transform_mu} can be written
\begin{align*}
	\int_{[0, \infty)}e^{-\theta y}\mu_1^{\lambda}(\diff y)=\frac{\varphi_1(0,q,\lambda)}{\varphi_1(\theta,q,\lambda)}  \quad \textrm{where} \; \varphi_1(\theta,q,\lambda) :=  \frac {\Phi(q)+\theta} {\theta + \Phi(q+\lambda)}.
\end{align*}
(2) For Problem 2, the Fourier transform \eqref{laplace_transform_mu_2}  can be written
\begin{align*}
	\int_{\R}e^{ \mathrm{i} \theta y}\mu_2^{\lambda}(\diff y)=\frac{\varphi_2(0,q,\lambda)}{\varphi_2(\theta,q,\lambda)}  \quad \textrm{where} \; \varphi_2(\theta,q,\lambda) :=  \frac{(\Psi(\theta)+(q+\lambda)) (\Phi(q)-\mathrm{i}\theta)} {\Phi(q+\lambda)-\mathrm{i}\theta}.
\end{align*}
These expressions can be seen as  generalizations of the classical case \eqref{mu_laplace2}.
\end{remark}

We now consider the spectrally positive case. For Problem 1,
the Gittins index \eqref{Gittins_SP_1} can be written
\begin{align*}
	\Gamma(x)=\int_{[0,\infty)}R(x+y)\mu_1^{\lambda}(\diff y), \quad x \in \R,
\end{align*}
with 
\begin{align*}
	\mu_1^{\lambda}(\diff y)&:= \frac {q \Phi(q+\lambda)} {(\lambda+q)\Phi(q)} \left( \delta_0(\diff y)+ 1_{\{y>0\}} (\Phi(q+\lambda)-\Phi(q))H^{(q)}(y,\Phi(q+\lambda))\diff y\right), \quad y \geq 0.
\end{align*}
For Problem 2, the Gittins index \eqref{gitt_prob_1_sp} can be written
with $\widehat{g}(u) := g(-u)$, $u \in \R$,
\begin{align*}
(q + \lambda) \Gamma(x)
&=\frac{q \Phi(q+\lambda)}{\Phi(q)}\Bigg[\int_{\R}r(x-u)g^{(q+\lambda)}(u)\diff u \\ &+(\Phi(q+\lambda)-\Phi(q)) \int_0^\infty   \int_{\R}  H^{(q)}(y;\Phi(q+\lambda)) r(x+y-u)g^{(q+\lambda)}(u)\diff u \diff y\Bigg] \\
&=\frac{q \Phi(q+\lambda)}{\Phi(q)}\Bigg[\int_{\R}r(x+u)\widehat{g}^{(q+\lambda)}(u)\diff u \\ &+(\Phi(q+\lambda)-\Phi(q))\int_{\R}   \int_0^\infty    H^{(q)}(y;\Phi(q+\lambda)) r(x+w)\widehat{g}^{(q+\lambda)}(w-y) \diff y \diff w\Bigg] \\
&= \int_{\R}r(x+u)\mu_2^{\lambda}(\diff u)
\end{align*}
where
\begin{align*}
	\mu_2^\lambda(\diff u):=\frac {q \Phi(q+\lambda)} {\Phi(q)}\left(\widehat{g}^{(q+\lambda)}(u) + (\Phi(q+\lambda)-\Phi(q)) (f^{(q)}\star \widehat{g}^{(q+\lambda)})(u) \right) \diff u, \quad u \in \R,
\end{align*}
with
$f^{(q)}(u):=H^{(q)}(u,\Phi(q+\lambda)) 1_{\{u > 0 \}}$.

\begin{remark} Similar to Remark \ref{remark_prob_measure}, both  $\mu_1^{\lambda}$ and $\mu_2^{\lambda}$ are probability measures.
\end{remark}

\begin{proposition} \label{proposition_SP}
 When $X$ is spectrally positive, the measures $\mu_1^\lambda$ and $\mu_2^\lambda$ converge weakly to $\mu$ (defined by \eqref{mu_laplace_classical_SP}) as $\lambda \to \infty$.
\end{proposition}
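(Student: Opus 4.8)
The plan is to follow the proof of Proposition~\ref{convergence} verbatim in structure: for Problem~1 the measure $\mu_1^\lambda$ is supported on $[0,\infty)$, so I would compute its Laplace transform, whereas $\mu_2^\lambda$ has support $\R$, so I would pass to its Fourier transform; in each case I then let $\lambda\to\infty$ and invoke the continuity theorem. The one genuinely new ingredient relative to the spectrally negative case is that $H^{(q)}(\cdot;\Phi(q+\lambda))$ now appears in both densities, so the first task is to evaluate its transform explicitly.

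For Problem~1 I would first record the Laplace transform of the second scale function: writing $Z^{(q)}(x;\alpha)=e^{\alpha x}\bigl(1+(q-\psi(\alpha))\int_0^x e^{-\alpha z}W^{(q)}(z)\diff z\bigr)$ and using \eqref{new_4_a} after interchanging the order of integration gives, for $\theta$ large,
\[
\int_0^\infty e^{-\theta x}Z^{(q)}(x;\alpha)\diff x=\frac{1}{\theta-\alpha}\,\frac{\psi(\theta)-\psi(\alpha)}{\psi(\theta)-q}.
\]
Taking $\alpha=\Phi(q+\lambda)$ (so $\psi(\alpha)=q+\lambda$) and subtracting the $W^{(q)}$-term from \eqref{der_gamma_0_0} yields a closed form for $\int_0^\infty e^{-\theta y}H^{(q)}(y;\Phi(q+\lambda))\diff y$. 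Inserting this together with the Dirac mass into the definition of $\mu_1^\lambda$ and simplifying, the algebra collapses (the apparent poles at $\theta=\Phi(q+\lambda)$ and $\theta=\Phi(q)$ are removable) to
\[
\int_{[0,\infty)}e^{-\theta y}\mu_1^\lambda(\diff y)=\frac{q\,\Phi(q+\lambda)}{(\lambda+q)\Phi(q)}\,\frac{(\theta-\Phi(q))\bigl(\psi(\theta)-(q+\lambda)\bigr)}{(\psi(\theta)-q)(\theta-\Phi(q+\lambda))}.
\]
The decisive limiting step uses $\lambda+q=\psi(\Phi(q+\lambda))$ and $\Phi(q+\lambda)\to\infty$: the combination $\frac{\Phi(q+\lambda)}{\lambda+q}\,\frac{\psi(\theta)-\psi(\Phi(q+\lambda))}{\theta-\Phi(q+\lambda)}=\frac{1-\psi(\theta)/\psi(\Phi(q+\lambda))}{1-\theta/\Phi(q+\lambda)}\to1$, so the transform converges to $\frac{q}{\Phi(q)}\frac{\theta-\Phi(q)}{\psi(\theta)-q}$, which is exactly \eqref{mu_laplace_classical_SP}, and the continuity theorem for Laplace transforms gives the weak convergence.

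For Problem~2 I would exploit the same factorization that works in the spectrally negative case. Let $\Psi$ be the characteristic exponent of $X$, i.e.\ $\E[e^{\mathrm{i}\theta X(s)}]=e^{-\Psi(\theta)s}$, so that $\psi(-\mathrm{i}\theta)=-\Psi(\theta)$; since $\widehat g^{(q+\lambda)}$ is the $(q+\lambda)$-resolvent density of $X=-Y$, its Fourier transform is $(\Psi(\theta)+(q+\lambda))^{-1}$. By the convolution theorem the Fourier transform of $\mu_2^\lambda$ factors as $\frac{q\Phi(q+\lambda)}{\Phi(q)}\,(\Psi(\theta)+(q+\lambda))^{-1}$ times the bracket $1+(\Phi(q+\lambda)-\Phi(q))\int_0^\infty e^{\mathrm{i}\theta u}H^{(q)}(u;\Phi(q+\lambda))\diff u$. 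This bracket is precisely the Problem~1 bracket evaluated at $\theta\mapsto-\mathrm{i}\theta$, which I already know equals $\frac{(\mathrm{i}\theta+\Phi(q))(\Psi(\theta)+(q+\lambda))}{(\Psi(\theta)+q)(\mathrm{i}\theta+\Phi(q+\lambda))}$; the factor $\Psi(\theta)+(q+\lambda)$ cancels, leaving $\frac{q}{\Phi(q)}\frac{\mathrm{i}\theta+\Phi(q)}{\Psi(\theta)+q}\frac{\Phi(q+\lambda)}{\mathrm{i}\theta+\Phi(q+\lambda)}$. Letting $\lambda\to\infty$ sends the last factor to $1$, giving the Fourier version of \eqref{mu_laplace_classical_SP} (obtained by $\theta\mapsto-\mathrm{i}\theta$, using $\psi(-\mathrm{i}\theta)=-\Psi(\theta)$), and the continuity theorem finishes the proof.

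I expect the main obstacle to be the bookkeeping around the transform of $H^{(q)}(\cdot;\Phi(q+\lambda))$. On the one hand, the naive Laplace computation is valid only for $\theta>\Phi(q+\lambda)$, so I must justify the closed form on $\{\operatorname{Re}\theta>0\}$ by analytic continuation, using that $0\le H^{(q)}(y;\cdot)\le 1$ makes the integral analytic there and that the candidate poles cancel. On the other hand, moving between the Laplace and Fourier pictures via $\theta\mapsto-\mathrm{i}\theta$ requires careful tracking of the sign and conjugation conventions to match \eqref{mu_laplace_classical_SP}; everything else is the routine algebraic simplification already illustrated for the spectrally negative case.
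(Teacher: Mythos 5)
Your proposal is correct and follows essentially the same route as the paper: compute the Laplace transform of $\mu_1^\lambda$ via the transform of $Z^{(q)}(\cdot;\Phi(q+\lambda))$ and $H^{(q)}(\cdot;\Phi(q+\lambda))$ (extended from $\theta>\Phi(q+\lambda)$ to $\theta\geq 0$ by analytic continuation), the Fourier transform of $\mu_2^\lambda$ via the convolution theorem with the substitution $\theta\mapsto-\mathrm{i}\theta$, and conclude by the continuity theorem; your closed forms and limits agree with \eqref{laplace_transform_mu_SP} and \eqref{laplace_transform_mu_SP_2}. The only cosmetic differences are that you derive the transform of $Z^{(q)}(\cdot;\alpha)$ for general $\alpha$ directly from its definition rather than from \eqref{Z_special_2}, and you phrase the limit via $\lambda+q=\psi(\Phi(q+\lambda))$ instead of splitting off the two factors tending to $1$ separately.
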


\begin{proof}

\underline{Proof for Problem 1}.
In order to characterize the measure $\mu^{\lambda}$ we compute its Laplace transform. To this end, for $\theta > \Phi(q)$, using \eqref{Z_special_2}, 
\begin{align}\label{new_2_a}
	\int_0^{\infty}e^{-\theta y}Z^{(q)}(y;\Phi(q+\lambda))\diff y
	&=\lambda\int_0^{\infty}e^{-\theta y}\int_y^{\infty}e^{-\Phi(q+\lambda)(u-y)}W^{(q)}(u)\diff u \diff y\notag\\
	&=\lambda\int_0^{\infty}e^{-\Phi(q+\lambda)u}W^{(q)}(u)\int_0^{u}e^{-(\theta-\Phi(q+\lambda)) y}\diff y\diff u \notag\\
	&=\frac{\lambda}{\Phi(q+\lambda)-\theta}\int_0^{\infty}e^{-\Phi(q+\lambda)u}W^{(q)}(u)\left(e^{(\Phi(q+\lambda)-\theta)u}-1\right)\diff u \notag\\
	&=\frac{\lambda}{\Phi(q+\lambda)-\theta}\left(\int_0^{\infty}e^{-\theta u}W^{(q)}(u)\diff u-\int_0^{\infty}e^{-\Phi(q+\lambda)u}W^{(q)}(u)\diff u \right)\notag\\
	&=\frac{\lambda+q-\psi(\theta)}{(\Phi(q+\lambda)-\theta)(\psi(\theta)-q)},
\end{align}
and hence, again using \eqref{new_4_a}, 
\begin{align}
	(\Phi(q+\lambda)-\Phi(q)) \int_0^\infty e^{-\theta y}&H^{(q)}(y,\Phi(q+\lambda))\diff y
	\notag\\
	&=(\Phi(q+\lambda)-\Phi(q)) \frac{\lambda+q-\psi(\theta)}{(\Phi(q+\lambda)-\theta)(\psi(\theta)-q)}-\frac{\lambda}{\psi(\theta)-q}\notag\\
	&=\frac{(q-\psi(\theta))\Phi(q+\lambda)-(\lambda+q-\psi(\theta))\Phi(q)+\lambda\theta}{(\psi(\theta)-q)(\Phi(q+\lambda)-\theta)}, \label{new_5_a}
\end{align}
which holds for any $\theta \geq 0$  by analytic continuation and is finite in view of Lemma  \ref{thm_resolvent}(2).
Thus,
\begin{align*}
	&\int_{[0, \infty)} e^{-\theta y}\left[ \delta_0(\diff y)+ 1_{\{y>0\}} (\Phi(q+\lambda)-\Phi(q))H^{(q)}(y,\Phi(q+\lambda))\diff y\right] \\
	&= 1 + \frac{(q-\psi(\theta))\Phi(q+\lambda)-(\lambda+q-\psi(\theta))\Phi(q)+\lambda\theta}{(\psi(\theta)-q)(\Phi(q+\lambda)-\theta)} \\
&= \frac{(\lambda + q-\psi(\theta) ) (\theta - \Phi(q)) }{(\psi(\theta)-q)(\Phi(q+\lambda)-\theta)}.
\end{align*}
Therefore, 
\begin{align}
		\int_{[0,\infty)} e^{-\theta y}\mu_1^{\lambda}(\diff y)=
		\frac {\lambda + q-\psi(\theta) } {\lambda+q}  \frac {\Phi(q+\lambda)} {\Phi(q+\lambda)-\theta} \frac{ \theta - \Phi(q) }{ \psi(\theta)-q} {\displaystyle \frac{q}{\Phi(q)}}, \quad \theta \geq 0,
		\label{laplace_transform_mu_SP}
\end{align}
which converges to  \eqref{mu_laplace_classical_SP} as $\lambda \to \infty$.
By the continuity theorem, the weak convergence holds. 

\underline{Proof for Problem 2}.
Again let $\Psi(\theta) = - \psi(\mathrm{i} \theta)$ (where we recall $\psi$ is for the dual spectrally negative \lev process $Y = -X$) and hence $\E [e^{\mathrm{i} \theta Y(s)}] = e^{-\Psi(\theta)s}$, $\theta \in \R, s \geq 0$. 

We have
\begin{align} \label{fourier_g_dual}
\int_{\R}e^{\mathrm{i}\theta y}\widehat{g}^{(q+\lambda)}(y)\diff y=\E\left[\int_{0}^{\infty}e^{-(q+\lambda)s}e^{-\mathrm{i}\theta Y(s)}\diff s\right]&=\int_{0}^{\infty}e^{-(q+\lambda)s}e^{-\Psi(-\theta)s}\diff s=\frac{1}{\Psi(-\theta)+(q+\lambda)}.
\end{align}

%
Because \eqref{new_5_a} also holds for $\theta=0$ and it is finite, 
by analytic continuation, we obtain
\begin{align*}
(\Phi(q+\lambda)-\Phi(q)) \int_0^\infty e^{\mathrm{i}\theta y}&H^{(q)}(y,\Phi(q+\lambda))\diff y=\frac{(q+\Psi(-\theta))\Phi(q+\lambda)-(\lambda+q+\Psi(-\theta))\Phi(q)-i\lambda\theta}{(-\Psi(-\theta)-q)(\Phi(q+\lambda)+\mathrm{i}\theta)}.
\end{align*}

Together with \eqref{fourier_g_dual} and the convolution theory for Fourier transforms,
\begin{align}\label{laplace_transform_mu_SP_2}
\begin{split}
	\int_{\R}e^{\mathrm{i}\theta y}\mu_2^{\lambda}(\diff y)&=\frac {q \Phi(q+\lambda)} {\Phi(q)}\frac{1}{\Psi(-\theta)+(q+\lambda)}\left(1+\frac{(q+\Psi(-\theta))\Phi(q+\lambda)-(\lambda+q+\Psi(-\theta))\Phi(q)-i\lambda\theta}{(-\Psi(-\theta)-q)(\Phi(q+\lambda)+\mathrm{i}\theta)}\right) \\
	&=\frac { \mathrm{i}\theta + \Phi(q)    } {\Phi(q)}
	\frac {q} {\Psi(-\theta)+q}
	\frac{ \Phi(q+\lambda)}{\Phi(q+\lambda)+ \mathrm{i}\theta},
	\end{split}
\end{align}
which converges as $\lambda \to \infty$ to
\begin{align*}
\int_{[0,\infty)} e^{\mathrm{i} \theta y} \mu (\diff y) = \frac {q} {\Phi(q)}\frac{ \mathrm{i}\theta + \Phi(q) }{\Psi(-\theta)+q},
\end{align*}
matching \eqref{mu_laplace_classical_SP} (with $\theta$ replaced by $- \mathrm{i} \theta$). Therefore the continuity theorem shows the weak convergence.

\end{proof}


\begin{remark}
(1) For Problem 1, the Laplace transform \eqref{laplace_transform_mu_SP} can be written
\begin{align*}
		\int_{[0,\infty)} e^{-\theta y}\mu_1^{\lambda}(\diff y)=\frac{\varphi_1(0,q,\lambda)}{\varphi_1(\theta,q,\lambda)}
		\quad \textrm{where} \; \varphi_1(\theta,q,\lambda)
&:= \frac {(\psi(\theta)-q)(\Phi(q+\lambda)-\theta)} {(\lambda + q-\psi(\theta) ) (\theta - \Phi(q)) }.
\end{align*}
(2)  For Problem 2, the Fourier transform \eqref{laplace_transform_mu_SP_2} can be written
\begin{align*}
	\int_{\R} e^{\mathrm{i}\theta y}\mu_2^{\lambda}(\diff y)=\frac{\varphi_2(0,q,\lambda)}{\varphi_2(\theta,q,\lambda)}
	\quad \textrm{where} \; 	\varphi_2(\theta,q,\lambda)
	:= 
\frac {(\Psi(-\theta)+q)(\Phi(q+\lambda)+\mathrm{i}\theta)} { \mathrm{i}\theta + \Phi(q) }.  
\end{align*}
These can be seen as generalizations of  \eqref{mu_laplace2}.
\end{remark}

\begin{remark}
The weak convergence as in Propositions \ref{convergence} and \ref{proposition_SP} implies the convergence of the Gittins index, if $R(\cdot)$ and $r(\cdot)$ are bounded and continuous in Problems 1 and 2, respectively.
\end{remark}


%
%

\section{Concluding remarks} \label{section_conclusion}

In this paper, we studied an extension of the MAB driven by \lev processes  \cite{KM95} where decision opportunities arrive  at Poisson arrival times. The Gittins index can be written analogously to the classical case \eqref{gittins}. In particular, we obtained explicitly its form in terms of the scale function for the case of spectrally one-sided \lev processes and showed its convergence to those in \cite{KM95} as the arrival rate increases to infinity. 

The studied problem is new to the best of our knowledge and there are several venues for future research. 

First, in the current paper, we did not discuss the optimality of the Gittins index policy, and it is of great importance to analyze its near or exact optimality.
The convergence results obtained in Section \ref{section_convergence} suggest that it is  near optimal at least when $\lambda$ is high. It is a non-trivial and crucial problem to investigate whether the optimality remains to hold generally under the Poissonian decision times.

Second, one potential application of the considered problem is to approximate the optimal solutions in the classical discrete-time model (in Section \ref{section_discrete_time}) in terms of the Gittins index policy in the considered problem by approximating deterministic decision times in terms of Poisson arrival times. 
  This is related to Carr's randomization \cite{Carr}, where constant discrete decision times are approximated by Erlang random variables, or the sum of independent exponential random variables. This approximation method is conjectured to be practical and accurate in view of the existing related results in  stochastic control  
 such as \cite{Leung}.

Third, the obtained results are conjectured to hold for more general processes. The convergence of our  Gittins index to that in \eqref{gittins} is easily conjectured to hold for a general \lev process with two-sided jumps. In addition, the Gittins index may admit analytical expression written in terms of the Wiener-Hopf factorization, even when the spectrally one-sided assumption is relaxed. It is also of great interest to generalize the results to a more general Markov process with jumps by taking advantage of the recent development of excursion theory such as \cite{Noba}.

\end{document}